\newcommand{\mtp}[1]{\textcolor{orange}{\bf #1}}
\newtheorem{prop}{Proposition}
\newtheorem{lem}{Lemma}
\begin{document}
	
	
	\title{Sparse Additive Gaussian Process Regression}
	\author{\name{Hengrui Luo} \email luo.619@osu.edu \\
		\addr Department of Statistics\\
		The Ohio State University\\
		Columbus, OH 43210, USA \AND \name{Giovanni Nattino} \email nattino.1@osu.edu
		\\
		\addr Division of Biostatistics, College of Public Health\\
		The Ohio State University\\
		Columbus, OH 43210, USA \AND \name{Matthew T. Pratola} \email
		mpratola@stat.osu.edu \\
		\addr Department of Statistics\\
		The Ohio State University\\
		Columbus, OH 43210, USA\\
		$ $}
	
	\editor{}
	
	\maketitle


\begin{abstract}
In this paper we introduce a novel model for Gaussian process (GP)
regression in the fully Bayesian setting. Motivated by the ideas of
sparsification, localization and Bayesian additive modeling, our model
is built around a recursive partitioning (RP) scheme. Within each
RP partition, a sparse GP (SGP) regression model is fitted. A Bayesian additive
framework then combines multiple layers of partitioned SGPs, capturing
 both global trends and local refinements with efficient computations. The model
addresses both the problem of efficiency in fitting a full Gaussian
process regression model and the problem of prediction performance
associated with a single SGP. Our approach mitigates
the issue of pseudo-input selection and avoids the need for complex
inter-block correlations in existing methods.  The
crucial trade-off becomes choosing between many simpler local model
components or fewer complex global model components, which the practitioner can
sensibly tune. Implementation is via a 
Metropolis-Hasting Markov chain Monte-Carlo algorithm with Bayesian back-fitting.
We compare our model against popular alternatives on simulated and
real datasets, and find the performance is competitive, while the
fully Bayesian procedure enables the quantification of model uncertainties. 
\end{abstract}
\begin{keywords} Sparse Gaussian Process, Recursive Partition Scheme,
Bayesian Additive Model, Nonparametric Regression. \end{keywords}

\newpage{}

\section{\label{sec:Introduction}Introduction}

Gaussian process (GP) regression is a widely adopted regression model \citep{Rasmuessen&Williams2006}.
Taking a Bayesian approach, its posterior distribution provides a principled way to
quantify uncertainties while having nice theoretical properties \citep{Gelman2003}.
However, the computational cost of GP likelihood evaluations based
on an observed dataset $\{y,\mathcal{X}\}$ of size $n$ is of order
$\mathcal{O}(n^{3})$, which primarily results from the need to invert
an $n\times n$ covariance matrix. Therefore, the computational cost
could be prohibitively high in scenarios where large datasets need
to be analyzed. It is a focus of much current research to solve this
problem of high computational cost for GP regression \citep{Banerjee2012,Liu2018}.

Many approaches to circumvent this problem have been explored, such as
low-rank covariance approximation \citep{Titsias2009}, model likelihood
approximations \citep{KSN2008} and local GP approximations \citep{Snelson&Ghahramani2007,Gramacy&Apley2015}.
However, most of these approaches are not fully Bayesian.

We are inspired by the idea of low-rank sparse GP regression \citep{Snelson&Ghahramani2006}
and localization ideas \citep{Chipman1998,Lee_etal2017,Gramacy&Apley2015,Park&Huang2016,Nguyen-Tuong2009,Chipman2016,Lee_etal2017},
but we still want to incorporate these methods within a fully Bayesian
framework. Borrowing the framework of Bayesian (generalized) additive
modeling \citep{Hastie&Tibshirani1990,Hastie&Tibshirani2000}, 
we propose the Sparse Additive
Gaussian Process (SAGP) model.  SAGP combines sparse GP regression
and a recursive partition (RP) scheme within a fully Bayesian model.
It turns out that our approach can simultaneously handle both local and global features
 in large datasets while realizing gains in computational
efficiency. Furthermore, it provides principled uncertainty quantification
for parameters and posterior predictions.  A key feature of the approach is a much simplified fixed partitioning scheme that avoids the added computational costs of stochastic tree-based partitioning models (e.g. \citep{Chipman1998,Chipman2016,gramacy2007tgp}).
To our best knowledge, this kind of additive Bayesian model, combining
both sparsification and localization, has never been explored.

The organization of the paper is as follows. In section \ref{sec:Backgrounds}
we will briefly review the background knowledge for sparse GP, localization
and Bayesian additive modeling as they are essential ingredients of
SAGP modeling. In section \ref{sec:Sparse Additive Gaussian Process SAGP}
we will specify the SAGP model. Sections \ref{sec:Simulation Study}
and \ref{sec:Real-world-Data-Applications} are analyses 
of simulated and real-world datasets. Finally,
we conclude our paper with a discussion in section \ref{sec:Conclusion}.

\section{\label{sec:Backgrounds}Background}

\subsection{\label{sec:Gaussian Process Regression}Gaussian Process Regression}

We start with GP regression on the input domain $\mathsf{X}$ and
use the notation $N_{d}(\boldsymbol{m},\Sigma)$ to denote the $d$-dimensional
Gaussian distribution with mean vector $\boldsymbol{m}$ and covariance
matrix $\Sigma,$ and the notation $N_{d}(\boldsymbol{y}\mid\boldsymbol{m},\Sigma)$
to denote the $d$-dimensional Normal density evaluated at $\boldsymbol{y}\in\mathbb{R}^{n}$.
The prior of the mean regression function is assumed to be a GP with
known mean and covariance kernel function. Posterior estimation
and prediction arise from combining the prior
belief with the information contained in the likelihood of response
variables $\boldsymbol{y}=(y_{1},\ldots,y_{n})^{T},$ observed at known
input locations $\mathcal{X}=\{\boldsymbol{x}_{i}\in\mathbb{R}^{d},i=1,\ldots,n\}\subset\mathsf{X},$
by using Bayes theorem. We also call $f$ the target and the variable
$\boldsymbol{x}_{i}$ the input, based on the model form 
\begin{align}
\begin{array}{c}
y(\boldsymbol{x}_{i})=f(\boldsymbol{x}_{i})+\epsilon_{i},i=1,\ldots,n\\
\epsilon_{i}\sim N_{1}(0,\sigma_{\epsilon}^{2})
\end{array}
\end{align}
which expresses the relationship between input $\boldsymbol{x}_{i}$
and the unknown response $f(\boldsymbol{x}_{i})$ observed as $y_{i}$
with observational error $\epsilon_{i}$ having the variance $\sigma_{\epsilon}^{2}$.
Using vector notations we write $\boldsymbol{y}=(y_{1},\ldots,y_{n})^{T}=\left(y(\boldsymbol{x}_{1}),y(\boldsymbol{x}_{2}),\ldots,y(\boldsymbol{x}_{n})\right)^{T}$,
$\boldsymbol{f}=\left(f(\boldsymbol{x}_{1}),\ldots,f(\boldsymbol{x}_{n})\right)^{T}$
and the noise $\boldsymbol{\epsilon}\sim N_{n}(\boldsymbol{0}_{n},\sigma_{\epsilon}^{2}\boldsymbol{I}_{n})$
to yield $\boldsymbol{y}=\boldsymbol{f}+\boldsymbol{\epsilon}$.

Without loss of generality, it is often convenient to assume that the mean vector
$\boldsymbol{f}$ is a
realization of a zero mean Gaussian process, $\boldsymbol{f}\sim N_{n}(\boldsymbol{0},\boldsymbol{K}_{n})$,
where $\boldsymbol{K}_{n}=\left[K(\boldsymbol{x}_{i},\boldsymbol{x}_{j})\right]_{i,j=1}^{n},$
 with covariance kernel $K(\cdot,\cdot):\mathbb{R}^{d}\times\mathbb{R}^{d}\rightarrow\mathbb{R}$
encoding assumed properties of the unknown function $f$ to satisfy
the application of interest \citep{Rasmuessen&Williams2006}.

\subsection{\label{sec:Sparsification}Sparsification of Gaussian Processes}

There are a variety of sparse approximation approaches to GP regression
(e.g. \citealt{Lawrence_etal2003,QCandela&Rasmussen2005}).
A popular approach is the pseudo-input
(or latent variable) approach. By replacing the exact covariance matrix in the likelihood
computation with a low-rank approximation, one can greatly reduce
computational cost.  \citet{Snelson&Ghahramani2006} propose the Sparse Gaussian Process (SGP) model by using
a subset of the full inputs $\mathcal{X}=\{\boldsymbol{x}_{1},\ldots,\boldsymbol{x}_{n}\}$
as pseudo-inputs, denoted as $\bar{\mathcal{X}}=\{\bar{\boldsymbol{x}}_{1},\ldots,\bar{\boldsymbol{x}}_{m}\}\subset\mathcal{X}$,
for $m\ll n$. 
Then, $\bar{\boldsymbol{f}}=\left(f(\bar{\boldsymbol{x}}_{1}),f(\bar{\boldsymbol{x}}_{2}),\ldots,f(\bar{\boldsymbol{x}}_{m})\right)^{T}$
are called pseudo-targets, and 
\begin{align*}
\boldsymbol{K}_{n} & \coloneqq\left[K(\boldsymbol{x}_{k},\boldsymbol{x}_{l})\right]_{k,l=1}^{n},\\
\boldsymbol{K}_{m} & \coloneqq\left[K(\bar{\boldsymbol{x}}_{k},\bar{\boldsymbol{x}}_{l})\right]_{k,l=1}^{m},\\
\boldsymbol{K}_{nm} & =\left[K(\boldsymbol{x}_{i},\bar{\boldsymbol{x}}_{j})\right]_{i,j=1}^{n,m}=\boldsymbol{K}_{mn}^{T}
\end{align*}
denote the (cross-)covariance matrices among and between the full targets $\boldsymbol{f}$
and pseudo-targets $\bar{\boldsymbol{f}}$. Their approach treats the pseudo-inputs as (hyper-)parameters, resulting in a likelihood function that only requires the inversion of the dense $m\times m$ matrix $K_m$, a significant computational savings. 
The posterior and posterior predictive distributions
 can then be written in closed form by Gaussian conjugacy \citep{Snelson&Ghahramani2006}. 
 
For an SGP model with $m$ pseudo-inputs,
the full likelihood is $P(\boldsymbol{y}\mid\mathcal{X},\bar{\mathcal{X}},\bar{\boldsymbol{f}},\sigma^2_{\epsilon})=N_{n}(\boldsymbol{y}\mid\boldsymbol{K}_{nm}\boldsymbol{K}_{m}^{-1}\bar{\boldsymbol{f}},\boldsymbol{\Lambda}+\sigma_{\epsilon}^{2}\boldsymbol{I}_{n})$
where $\boldsymbol{\Lambda}=\text{diag}\left(K(\boldsymbol{x}_{i},\boldsymbol{x}_{i})-\boldsymbol{k}_{i}^{T}\boldsymbol{K}_{m}^{-1}\boldsymbol{k}_{i}\right)_{i=1}^{n}$ where $\boldsymbol{k}_{i}=\left(K(\bar{\boldsymbol{x}}_{1},\boldsymbol{x}_{i}),\ldots,K(\bar{\boldsymbol{x}}_{m},\boldsymbol{x}_{i})\right)^{T}$.
Using Bayes theorem, we can write the posterior distribution of pseudo-targets
as $P(\left.\bar{\boldsymbol{f}}\right|\mathcal{X},\boldsymbol{y},\bar{\mathcal{X}},\sigma^2_{\epsilon})=N_{m}(\bar{\boldsymbol{f}}\mid\boldsymbol{K}_{m}\boldsymbol{Q}_{m}^{-1}\boldsymbol{K}_{mn}\left(\boldsymbol{\Lambda}+\sigma_{\epsilon}^{2}\boldsymbol{I}\right)^{-1}\boldsymbol{y},\ \boldsymbol{K}_{m}\boldsymbol{Q}_{m}^{-1}\boldsymbol{K}_{m})$
where 
$\boldsymbol{Q}_{m}=\boldsymbol{K}_{m}+\boldsymbol{K}_{mn}\left(\boldsymbol{\Lambda}+\sigma^{2}_{\epsilon}\boldsymbol{I}\right)^{-1}\boldsymbol{K}_{nm}$.
The posterior predictive distribution for $y^{*}$ at a new input
$\boldsymbol{x}^{*}$, after integrating out the pseudo-target $\bar{\boldsymbol{f}}$,
can be written as $P(y_{*}\mid\boldsymbol{x}_{*},\mathcal{X},\boldsymbol{y},\sigma^2_{\epsilon})=N_{1}(\boldsymbol{k}_{*}^{T}\boldsymbol{Q}_{m}^{-1}\boldsymbol{K}_{mn}$
$\left(\boldsymbol{\Lambda}_{n}+\sigma_{\epsilon}^{2}\boldsymbol{I}_{n}\right)^{-1}\boldsymbol{y},\sigma_{\epsilon}^{2}+K_{**}-\boldsymbol{k}_{*}^{T}\boldsymbol{K}_{m}^{-1}\boldsymbol{k}_{*}+\boldsymbol{k}_{*}^{T}\boldsymbol{Q}_{m_{j}}^{-1}\boldsymbol{k}_{*})$,
where $\boldsymbol{k}_{*}=\left(K(\bar{\boldsymbol{x}}_{1},\boldsymbol{x}_{*}),\ldots,K(\bar{\boldsymbol{x}}_{m},\boldsymbol{x}_{*})\right)^{T}$.
In particular, when $n=m$ we obtain the posterior distributions
of the full Gaussian process model.

One central problem of the SGP approach is that the sparsification
depends on the choice of the pseudo-inputs $\bar{\mathcal{X}}$, which are treated as hyperparameters to be (somehow) selected once and then held fixed.
In the original work, \citet{Snelson&Ghahramani2006} propose to
choose the pseudo-inputs by optimizing the marginal likelihood,
or the KL divergence \citep{Titsias2009,Damianou&Lawrence2013}. In
our Bayesian approach, instead of using a fixed choice of pseudo-inputs
\citep{Titsias2009,Lee_etal2017}, we instead draw the pseudo-inputs from a prior.

\subsection{\label{sec:Bayesian Additive Modeling}Bayesian Additive Modeling
and Back-fitting}

Bayesian additive modeling \citep{Hastie&Tibshirani1990,Chipman1998}
is a flexible modeling technique that is widely adopted. Such 
additive models are formed by taking the sum of many model components, where each model component captures only a portion of the
overall response variability. In the Gaussian setting, fitting a Bayesian additive
model can be accomplished by using partial residuals to update each component sequentially in a so-called 
 back-fitting scheme \citep{Hastie&Tibshirani2000}. Following this scheme, we can represent an additive model with
$N$ components without intercept term in vector form as $\boldsymbol{y}=\sum_{j=1}^{N}\boldsymbol{f}_{j}+\boldsymbol{\epsilon},\boldsymbol{\epsilon}\sim N_{n}(\boldsymbol{0}_{n},\sigma_{\epsilon}^{2}\boldsymbol{I}_{n}).$

Bayesian back-fitting proceeds by fitting  each additive component, $\boldsymbol{f}_{j},$
by using the ``$j$-th partial residuals'', $\boldsymbol{r}_{j}=\boldsymbol{y}-\sum_{i\neq j}\boldsymbol{f}_{i}$.
These residuals are used as ``data'' for the $j$-th component. Starting
with a particular initial value, the back-fitting algorithm (Algorithm
3.1 in \citet{Hastie&Tibshirani2000}) iterates until the joint distribution
of all mean functions $\boldsymbol{f}_{1},\boldsymbol{f}_{2},\ldots,\boldsymbol{f}_{N}$
stabilizes.

One insight into the usefulness of this algorithm is to recognize that it allows updating the $f_j$'s one at a time rather than requiring an expensive joint update like the full GP regression on a large dataset.  Therefore, it would be advantageous to make the $f_j$ updates computationally cheap.

\subsection{\label{sec:Localization Partition Scheme}Localization via Partition
Schemes}

Partitioning the input space $\mathsf{X}=\bigcup_{j}\mathsf{X}_j$ has been another popular way of scaling-up regression models.
In this line of research, pioneering works were performed by \citet{Breiman1984},
\citet{denison1998bayesian} and \citet{Chipman1998,Chipman_etal2010,Chipman2016}.
Furthermore, various choices of partition schemes of the input domain
are discussed in the local GP regression literature \citep{Nguyen-Tuong2009, Gramacy&Apley2015,Park&Huang2016}.

In terms of Bayesian additive modeling, \citet{Chipman1998} model the
data in each partition $\mathsf{X}_j$ using an independent model component, conditional on the partitioning defined by a binary tree. This associates the fitted mean function (or target) $\boldsymbol{f}_{j}$
with the data lying in the specific partition $\mathsf{X}_j$. Subsequent works \citep{Gramacy&Apley2015,Chipman2016,Pratola2017}
 demonstrate that assembling many simpler
models over such partitioning schemes can usually out-perform a single
complex model fitted to the entire modeling domain.

As pointed out in \citet{Gramacy&Apley2015} and \citet{Park&Apley2018},
such localization of the input-domain will fit and predict non-stationary
datasets better. Also, multi-scale features of a dataset can usually
be well captured by introducing a hierarchical structure on the input
domain \citep{Fox&Dunson2012,Lee_etal2017}.

In our approach, capturing global and local features is accomplished
through a fixed partition scheme informed by the data $\mathcal{X}$. We will show how this can be done so that the partition scheme is well suited to the Bayesian backfitting algorithm, and use sparse model components to further enhance the scalability of the model.

\section{\label{sec:Sparse Additive Gaussian Process SAGP}Sparse Additive
Gaussian Process Regression (SAGP)}

The proposed SAGP model combines the three key ingredients of sparsification, Bayesian additive modeling (via backfitting), and localization in a clever way.  In particular, our model has the usual additive form,
\begin{align}
\boldsymbol{y} & =\sum_{j=1}^{N}\boldsymbol{f}_{j}+\boldsymbol{\epsilon},\label{eq:SAGP model}
\end{align}
for some error component $\boldsymbol{\epsilon}$ with variance $\sigma^2_\epsilon.$  Much effort in statistical modeling focuses on the $\boldsymbol{f}_{j}.$  For instance, in linear regression, $\boldsymbol{f}_{j}=\boldsymbol{X}_j\beta_j$ for the $j$th column of some design matrix $\boldsymbol{X}$ and vector parameter $\boldsymbol{\beta}$.  In our approach, each $\boldsymbol{f}_{j}$ has entries which are formed by weighted linear combinations of the pseudo-targets, $\boldsymbol{W}^T\bar{\boldsymbol{f}}_{j},$ and each vector of pseudo-targets $\bar{\boldsymbol{f}}_{j}$ arise from the pseudo-inputs $\bar{\mathcal{X}}^{(j)}$ belonging to the $j$th subdomain of the input domain $\mathsf{X}.$  Additional parameters $\boldsymbol{\kappa}$ will be involved in each component in forming the weights $\boldsymbol{W}$.  Finally, the subdomains are defined by a partitioning scheme, $\mathcal{B}_N$, and let  the collection of pseudo-inputs belonging to each partition be  $\bar{\mathcal{X}}^{(1)},\ldots,\bar{\mathcal{X}}^{(N)}.$  Then, our model takes a hierarchical form involving the likelihood function
$L(\boldsymbol{y}\vert \bar{f}_1,\ldots,\bar{f}_N,\boldsymbol{\kappa},\sigma^2_\epsilon,\mathcal{B}_N,\bar{\mathcal{X}}^{(1)},\dots,\bar{\mathcal{X}}^{(N)},\mathcal{X})$ as well as the prior distributions of the various additive model components in the overall model, $P(\bar{f}_1,\ldots,\bar{f}_N\vert\boldsymbol{\kappa},\mathcal{B}_N,\bar{\mathcal{X}}^{(1)},\ldots,\bar{\mathcal{X}}^{(N)},\mathcal{X}),$ $P(\boldsymbol{\kappa}\mid \mathcal{B}_N,\mathcal{X}),$  
$P(\sigma^2_\epsilon),$ and $P(\bar{\mathcal{X}}^{(1)},\ldots,\bar{\mathcal{X}}^{(N)}\vert\mathcal{B}_N,\mathcal{X}).$

To perform inference and prediction, we will be interested in the marginal posterior distribution $P(\bar{f}_1,\ldots,\bar{f}_N,\boldsymbol{\kappa},\sigma^2_{\epsilon}\vert \boldsymbol{y},\mathcal{B}_N,\mathcal{X}).$
Note that the posterior is dependent on the partitioning scheme $\mathcal{B}_N$ since it is held fixed in our modeling approach.
Therefore, we will start by describing the proposed partitioning scheme.  The partitioning scheme reduces computational cost by limiting the sample size in each model component by exploiting localization.  Second, conditional on this localization scheme, the model components (i.e. the $f_j$'s) themselves leverage the sparse Gaussian Process.  This sparsification reduces computational cost as described earlier.  Finally, our overall model combines all of these sparse localized components into a Bayesian additive model as defined by the likelihood function, and the overall model can be efficiently fit using Bayesian back-fitting.

\subsection{\label{sec:Definitions-and-Examples (RP scheme)}A Recursive Partitioning
Scheme}

We consider a recursive partitioning of the domain $\mathsf{X}$ that can be represented as a $2^d$-ary tree. Each node of the tree
corresponds to a subregion $B_{j}$ of $\mathsf{X}\subset\mathbb{R}^{d}$
called a \emph{block}. Only the node at the first level, i.e., the
root of the tree, corresponds to the whole domain ($B_{1}=\mathsf{X}$).
The collection of blocks corresponding to nodes at the same depth
of the tree is referred to as a \emph{layer}. The collection of all blocks across
all layers of the tree comprise the partitioning of  $\mathsf{X}.$  More formally, we define
a \emph{Recursive Partitioning} (RP) scheme as a collection of blocks
$\left\{ B_{1},...,B_{N}\right\}$ and layers $\mathcal{L}_{1},\ldots,\mathcal{L}_{L}$
of these blocks satisfying the following properties: 
\begin{enumerate}
\item[P1.] (Nestedness) For a block $B_{i}\subset\mathbb{R}^{d}$ in the $j$-th
layer $\mathcal{L}_{j}$, there exists a unique block $B_{k}\in\mathcal{L}_{j-1}\subset\mathbb{R}^{d}$
in the $(j-1)$-layer $\mathcal{L}_{j-1}$ such that $B_{i}\subset B_{k}$. 
\item[P2.] (Disjointedness, or non-overlapping) For two blocks $B_{i},B_{k}$
in the $j$-th layer $\mathcal{L}_{j}$ such that $B_{i}\neq B_{k}$,
their interiors do not intersect. 
\end{enumerate}
\FloatBarrier 

\begin{figure}

\begin{minipage}[c]{0.4\columnwidth}%
\includegraphics[width=8.5cm]{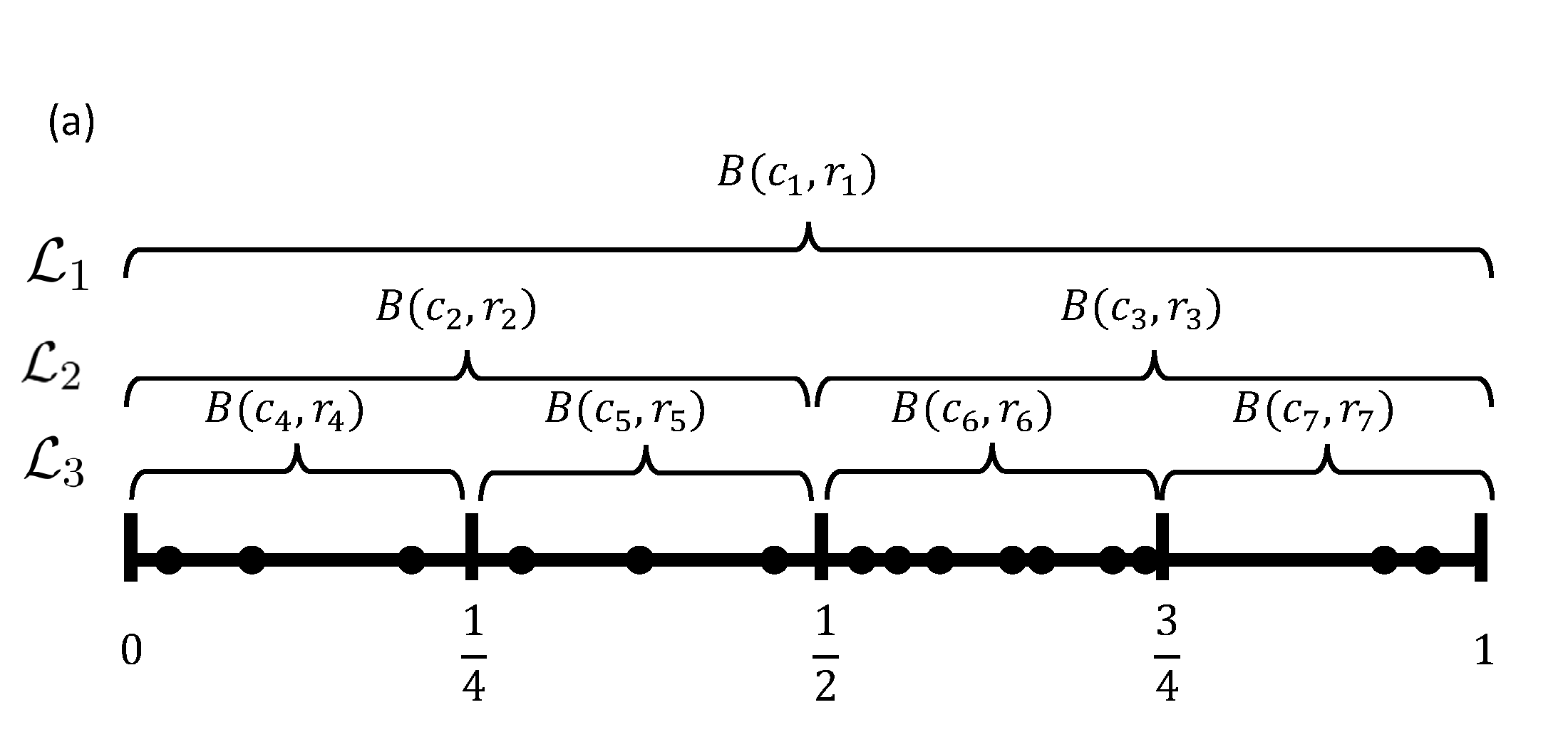}\\
\includegraphics[width=8.5cm]{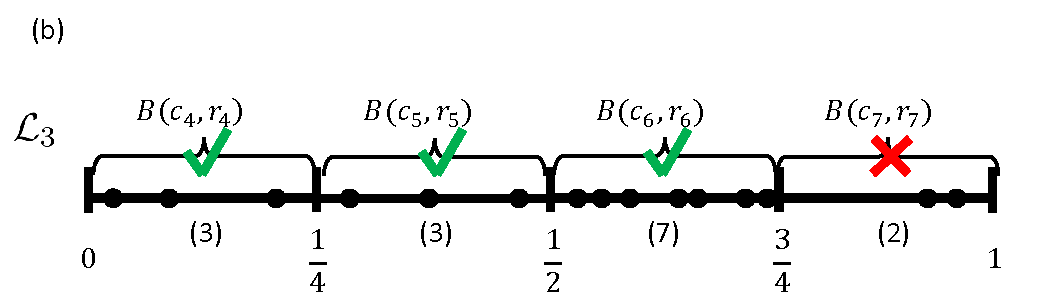}\\
\includegraphics[width=8.5cm]{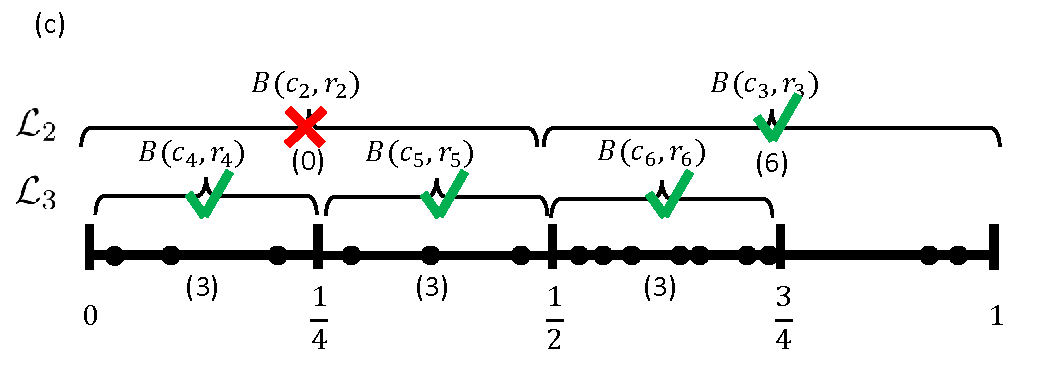}\\
\includegraphics[width=8.5cm]{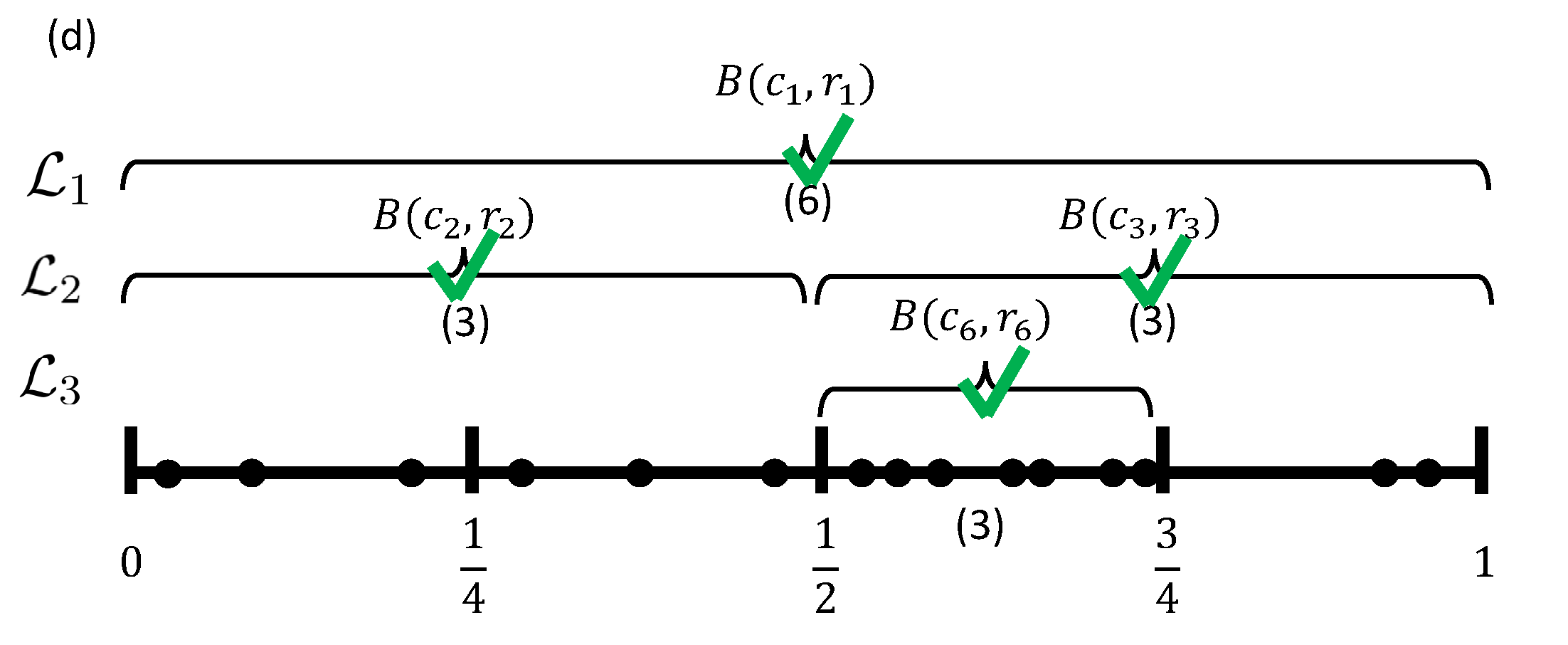}\\
\includegraphics[width=8.5cm]{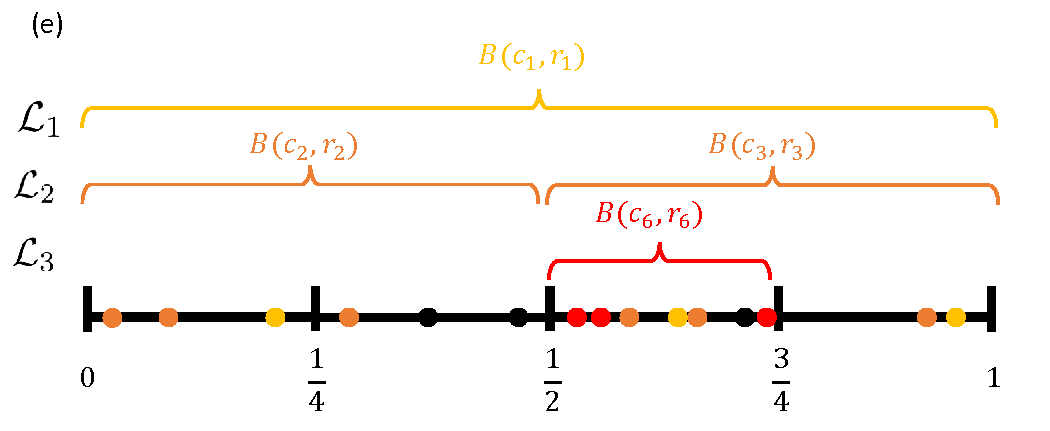}%
\end{minipage}
\hfill{}%
\begin{minipage}[c]{0.48\columnwidth}%
\footnotesize
\begin{itemize}
\vspace{0.35in}
\item[] The initial complete RP scheme as a binary tree with 3-layers on $[0,1]$.

\vspace{0.9in}
\item[] Starting from layer 3, we prune $B(\boldsymbol{c}_{7},\boldsymbol{w}_{7})$ since there are only $2<3$ observations available. We keep $B(\boldsymbol{c}_{6},\boldsymbol{w}_{6}),B(\boldsymbol{c}_{5},\boldsymbol{w}_{5}),B(\boldsymbol{c}_{4},\boldsymbol{w}_{4})$
as they all contain at least $m=3$ observations.

\vspace{0.25in}
\item[] Moving to layer 2, $B(\boldsymbol{c}_{3},\boldsymbol{w}_{3})$ has the 6 observations required by itself and its child $B(\boldsymbol{c}_{6}, \boldsymbol{w}_{6})$. Checking $B(\boldsymbol{c}_{2},\boldsymbol{w}_{2}),$ it contains only 6 observations so we prune its children $B(\boldsymbol{c}_{4},\boldsymbol{w}_{4}),B(\boldsymbol{c}_{5},\boldsymbol{w}_{5}).$

\vspace{0.35in}
\item[] Moving to layer 1, $B(\boldsymbol{c}_{1},\boldsymbol{w}_{1})$ contains
$15$ observations, therefore there are sufficient observations for $B(\boldsymbol{c}_{1},\boldsymbol{w}_{1})$
and its children $B(\boldsymbol{c}_{2},\boldsymbol{w}_{2})$ , $B(\boldsymbol{c}_{3},\boldsymbol{w}_{3})$ and  $B(\boldsymbol{c}_{6},\boldsymbol{w}_{6})$. We keep $B(\boldsymbol{c}_{1},\boldsymbol{w}_{1})$
and its children, completing the partitioning.

\vspace{0.4in}
\item[] Given the final RP scheme  $B(\boldsymbol{c}_{1},\boldsymbol{w}_{1})$, $B(\boldsymbol{c}_{2},\boldsymbol{w}_{2})$,
$B(\boldsymbol{c}_{3},\boldsymbol{w}_{3})$ and $B(\boldsymbol{c}_{6},\boldsymbol{w}_{6}),$ one possible random selection of the pseudo-inputs $\bar{\mathcal{X}_{j}}$ for the $j=1,\ldots,N$
different additive components (here $N=4$) conditional on the RP scheme is shown as colored dots.  Points with the same
color belong to blocks on the same layer.
\end{itemize}
\end{minipage}
\caption{\label{fig:Binary tree on 0_1}RP scheme on the domain $\mathsf{X}=[0,1]$
as a $2^{1}$-ary tree with 3 layers and $m=3$ pseudo-inputs per block. The $n=15$ data points $\mathcal{X}$ are represented as dots. The right pane describes the application of the RP pruning Algorithm 1 (a)-(d), and the selection of pseudo-inputs given the RP scheme in (e).  The left pane provides the analogous graphical construction of the RP scheme.
}
\end{figure}
To facilitate manipulating and storing the RP scheme on computer, we encode each block by its
centroid $\boldsymbol{c}_{j}=(c_{j}^{1},\cdots,c_{j}^{d})$ and half-width
$\boldsymbol{w}_{j}=(w_j^1,\ldots,w_j^d)$ where for simplicity we take the half-widths to be the same in each dimension given a layer $l$, $w_j^k=R_{l}, k=1,\ldots,d.$ The $j$-th block is then defined as 
\[
B_{j}\coloneqq B(\boldsymbol{c}_{j},\boldsymbol{w}_{j})=\left\{ \left.\boldsymbol{x}=(x^{1},\ldots,x^{d})\in\mathbb{R}^{d}\right||x^{k}-c_{j}^{k}|\leq w_{j}^k, k=1,\ldots,d\right\} .
\]
We require each block to have a minimum of $m_{j}$ observations,
allowing us to later define an SGP with $m_{j}$ pseudo-inputs
in each $B_{j}$. For simplicity of exposition, we will assume $m_{j}=m$ for all $j=1,\ldots,N.$ We also require pseudo-inputs $\bar{\mathcal{X}}^{(j)}$ to be mutually disjoint, so that each input setting in $\mathsf{X}$ is chosen as
a pseudo-input at most once.

An example RP scheme construction with $L=3$ layers and $m=3$ pseudo-inputs is shown in Figure \ref{fig:Binary tree on 0_1}.
The construction starts 
with a complete $2^d$-ary tree consisting of layers  $\mathcal{L}_{1}=\{B_{1}\},\mathcal{L}_{2}=\{B_{2},B_{3}\}$
and $\mathcal{L}_{3}=\{B_{4},B_{5},B_{6},B_{7}\},$ and a dataset of $n=15$ observations, shown as black dots.
Then, the complete tree is pruned according to Algorithm \ref{alg:pruning-algorithm-1}, which ensures
that each block $B_j$ will have at least $m$ pseudo-inputs available while also satisfying the
required properties P1 and P2.
Finally, given an RP scheme, one possible random selection of pseudo-inputs is shown.

Algorithm \ref{alg:pruning-algorithm-1} is able to perform the required pruning in general. 
Essentially, the algorithm works by requiring that the total number of observations
in block $B_j$ and all of $B_j$'s children satisfies
the total required number of pseudo-inputs for these components. Starting
from the bottom layer, Algorithm \ref{alg:pruning-algorithm-1} recursively
works up the tree, pruning sub-trees that do not satisfy this constraint
on total number of observations. Once the pruning is complete, the random selection of pseudo-inputs to blocks
can be drawn by starting with blocks in layer $L$ and working back to $B_{1},$ thereby
guaranteeing that all blocks meet the minimum of $m$ pseudo-inputs per block.
\begin{algorithm}[t]
\caption{\label{alg:pruning-algorithm-1} Pruning algorithm for RP scheme.}

\SetKwData{This}{this} \SetKwInOut{Input}{Input}\SetKwInOut{Output}{Output}
\Input{ RP partition scheme $\mathcal{A}$ consisting of $N$ components,
\; Observed dataset $\{\mathcal{X},\boldsymbol{y}\}$. \; } \Output{
RP partition scheme $\mathcal{A}^{'}$ consisting of $N^{'}\leq N$
components, \; } \For{$l$ \emph{in} $L:1$}{ \For{each component
$j$ \emph{in} the $l$-th layer $\mathcal{L}_{l}$}{ 



\For{$s$ \emph{in} $L:l$}{ 

$m_{req}$ $\leftarrow$ Sum of the numbers of pesudo-inputs required
for \emph{all} components contained in $B(\boldsymbol{c}_{j},\boldsymbol{w}_{j})$
in $\mathcal{A}^{'}$.\;

\uIf{ $\mid\mathcal{X}\cap B(\boldsymbol{c}_{j},\boldsymbol{w}_{j})\mid\geq m_{req}$
} { break\;} \Else{ Remove \emph{all} the children components
of component $j$ from the model in $s$-th layer.\; }


} } } 
\end{algorithm}

\subsection{SAGP Model}

Given a (pruned) RP scheme $\mathcal{B}_N$, we propose the additive model (\ref{eq:SAGP model}) for the response
$\boldsymbol{y}$, where each component $f_j=\left(f_{j}(\boldsymbol{x}_{1}),\ldots,f_{j}(\boldsymbol{x}_{n})\right)^{T}$ is described by an SGP model on the domain
$B_{j}$ and $\boldsymbol{\epsilon}\sim N_{n}(\boldsymbol{0}_{n},\sigma_{\epsilon}^{2}\boldsymbol{I}_{n}).$
For block $B(\boldsymbol{c}_{j},\boldsymbol{w}_{j})$, we use $\bar{\mathcal{X}}^{(j)}$ to
denote the pseudo-inputs for that block,
\begin{align*}
\bar{\mathcal{X}}^{(j)} & =\{\bar{\boldsymbol{x}}_{1}^{(j)},\ldots,\bar{\boldsymbol{x}}_{m_{j}}^{(j)}\}\subset B(\boldsymbol{c}_{j},\boldsymbol{w}_{j}) \text{ s.t. } \bar{\boldsymbol{x}}_{k}^{(j)}\in\mathsf{X}\, \forall k.
\end{align*}
Then, the SGP associated with $\boldsymbol{f}_{j}$ and pseudo-inputs $\bar{\mathcal{X}}^{(j)}$ has corresponding pseudo-targets, 
\begin{alignat}{1}
\bar{\boldsymbol{f}}_{j} & =\left(f_{j}(\bar{\boldsymbol{x}}_{1}^{(j)}),f_{j}(\bar{\boldsymbol{x}}_{2}^{(j)}),\ldots,f_{j}(\bar{\boldsymbol{x}}_{m_{j}}^{(j)})\right)^{T}\in\mathbb{R}^{m_{j}},j=1,\ldots,N.
\end{alignat}

Then, conditional on the RP scheme and pseudo-inputs,
the joint posterior of pseudo-targets and other parameters in (\ref{eq:SAGP model})
can be written as: 
\begin{align}
 & P(\bar{\boldsymbol{f}}_{1},\ldots,\bar{\boldsymbol{f}}_{N},\boldsymbol{\boldsymbol{\kappa}},\boldsymbol{\sigma}_{\epsilon}^{2}\mid\mathcal{B}_{N},\boldsymbol{y},\mathcal{X},\bar{\mathcal{X}}^{(1)},\ldots,\bar{\mathcal{X}}^{(N)})\propto\nonumber \\
 & \underbrace{P(\boldsymbol{y}\mid\bar{\boldsymbol{f}}_{1},\ldots,\bar{\boldsymbol{f}}_{N},\boldsymbol{\kappa},\boldsymbol{\sigma}_{\epsilon}^{2},\mathcal{B}_{N},\bar{\mathcal{X}}^{(1)},\ldots,\bar{\mathcal{X}}^{(N)},\mathcal{X})}_{\text{Likelihood Function}} \underbrace{P(\bar{\boldsymbol{f}}_{1},\ldots,\bar{\boldsymbol{f}}_{N}\mid\mathcal{B}_{N},\bar{\mathcal{X}}^{(1)},\ldots,\bar{\mathcal{X}}^{(N)},\mathcal{X},\boldsymbol{\boldsymbol{\kappa}})}_{\text{Pseudo-target Prior}}\times\nonumber \\
 & \underbrace{P(\boldsymbol{\boldsymbol{\kappa}}\mid\mathcal{B}_N)}_{\text{Kernel Prior}}\underbrace{P(\boldsymbol{\sigma}_{\epsilon}^{2})}_{\text{Error Prior}}.\label{eq:full likelihood}
\end{align}

In effect, we view the choice of pseudo-inputs $\bar{\mathcal{X}}^{(1)},\ldots,\bar{\mathcal{X}}^{(N)}$
as nuisance parameters, and ultimately will integrate them out with
respect to the prior $P(\bar{\mathcal{X}}^{(1)},\ldots,\bar{\mathcal{X}}^{(N)}\mid\mathcal{B}_{N})$,
which gives the marginal posterior of interest,
\begin{align*}
P(\bar{\boldsymbol{f}}_{1},\ldots,\bar{\boldsymbol{f}}_{N},\boldsymbol{\boldsymbol{\kappa}},\boldsymbol{\sigma}_{\epsilon}^{2}\mid\mathcal{B}_{N},\boldsymbol{y},\mathcal{X}) & = \\ 
\int P(\bar{\boldsymbol{f}}_{1},\ldots,\bar{\boldsymbol{f}}_{N},\boldsymbol{\boldsymbol{\kappa}},\boldsymbol{\sigma}_{\epsilon}^{2}\mid\mathcal{B}_{N},\boldsymbol{y},\mathcal{X},\bar{\mathcal{X}}^{(1)},\ldots,\bar{\mathcal{X}}^{(N)})
 & \underbrace{P(\bar{\mathcal{X}}^{(1)},\ldots,\bar{\mathcal{X}}^{(N)}\mid\mathcal{B}_{N})}_{\text{Pseudo-input Prior}} d\bar{\boldsymbol{x}}^{m_{1}}\ldots d\bar{\boldsymbol{x}}^{m_{N}},
\end{align*}
where $d\bar{\boldsymbol{x}}^{m_{j}}=d\bar{\boldsymbol{x}}_{1}^{(j)}\times\ldots\times d\bar{\boldsymbol{x}}_{m_{j}}^{(j)}.$

In subsection \ref{sec:Sampling Algorithm}
we will show a Gibbs sampler algorithm for SAGP fitting  and for calculating predictions, but first we describe in greater detail the likelihood function and various prior distributions involved in the SAGP model.


\subsubsection{\label{sec:Specification of Model Likelihoods}\label{sec:Conditional Likelihood of FULL OBSERVATIONS_FULL INPUTS}
Likelihood Function, \texorpdfstring{$P({\boldmath y}\mid\bar{\boldmath{f}}_{1},\ldots,\bar{\boldmath{f}}_{N}, \boldsymbol{\kappa},\sigma_{\epsilon}^{2},\mathcal{B}_{N},\bar{\mathcal{X}}^{(1)},\ldots,\bar{\mathcal{X}}^{(N)},\mathcal{X})$}{P}}

Let us denote the covariance
kernel for the SGP in the $j$-th block by $K^{(j)},j=1,\ldots,N$ . We use
the Gaussian covariance kernel supported inside $B_{j}$
with parameters $\boldsymbol{\kappa}^{(j)}=(\rho^{(j)},\eta^{(j)})$.  
We have
\begin{align}
K^{(j)}(\boldsymbol{x},\boldsymbol{x}') & \coloneqq\frac{1}{\eta^{(j)}}\cdot\left(\rho^{(j)}\right)^{\left[(\boldsymbol{x}-\boldsymbol{x}')^{T}(\boldsymbol{x}-\boldsymbol{x}')\right]},\forall\boldsymbol{x},\boldsymbol{x}'\in B_{j}.\label{eq:kernel function}
\end{align}

Using (\ref{eq:kernel function}),  
we can write down the (cross-)covariance matrices among and between inputs in $\mathcal{X}$ and $\bar{\mathcal{X}}^{(j)}$
as: 
\begin{align*}
\boldsymbol{K}_{n}^{(j)} & \coloneqq\left[K^{(j)}(\boldsymbol{x}_{k},\boldsymbol{x}_{l})\right]_{k,l=1}^{n},\\
\boldsymbol{K}_{m_{j}}^{(j)} & \coloneqq\left[K^{(j)}(\bar{\boldsymbol{x}}_{k}^{(j)},\bar{\boldsymbol{x}}_{l}^{(j)})\right]_{k,l=1}^{m_{j}},\\
\boldsymbol{K}_{nm_{j}}^{(j)} & \coloneqq\left[K^{(j)}(\boldsymbol{x}_{k},\bar{\boldsymbol{x}}_{l}^{(j)})\right]_{k,l=1}^{n,m_{j}}=\left(\boldsymbol{K}_{m_{j}n}^{(j)}\right)^{T}.
\end{align*}
For a general $\boldsymbol{x}\in\mathbb{R}^{d},$ we also have 
\begin{align}
\boldsymbol{k}_{\boldsymbol{x}}^{(j)} & \coloneqq\left(\begin{array}{ccc}
K^{(j)}(\bar{\boldsymbol{x}}_{1}^{(j)},\boldsymbol{x}), & \ldots & ,K^{(j)}(\bar{\boldsymbol{x}}_{m_{j}}^{(j)},\boldsymbol{x})\end{array}\right)^{T}.\label{eq:single vector shot}
\end{align}

Assuming the additive components are conditionally independent, the likelihood is
(see Lemma
\ref{lem:Conditional GP closed form Lemma} in Appendix \ref{sec:Detailed Derivation of Posterior Distribution})
\begin{align}
P(\boldsymbol{y}\mid\boldsymbol{\bar{f}}_{1},\ldots,\bar{\boldsymbol{f}}_{N},\boldsymbol{\boldsymbol{\kappa}},\boldsymbol{\sigma}_{\epsilon}^{2},\mathcal{B}_{N},\bar{\mathcal{X}}^{(1)},\ldots,\bar{\mathcal{X}}^{(N)}) & =N_{n}\left(\boldsymbol{y}\left|\sum_{j=1}^{N}\boldsymbol{K}_{nm_{j}}^{(j)}\left(\boldsymbol{K}_{m_{j}}^{(j)}\right)^{-1}\bar{\boldsymbol{f}}_{j},\sigma_{\epsilon}^{2}\boldsymbol{I}_{n}+\sum_{j=1}^{N}\boldsymbol{\Lambda}_{n}^{(j)}\right.\right)\label{eq:multiple obs pred}
\end{align}
where the matrix $\boldsymbol{\Lambda}_{n}^{(j)}\coloneqq\text{diag}\left(K_{ii}^{(j)}-\boldsymbol{k}_{i}^{(j)T}\left(\boldsymbol{K}_{m_{j}}^{(j)}\right)^{-1}\boldsymbol{k}_{i}^{(j)}\right)_{n\times n}$
takes the diagonal form, with $\boldsymbol{k}_{i}^{(j)}$ as defined
in (\ref{eq:single vector shot}) (with subscript $i$ being shorthand for $\boldsymbol{x}_i$).

\subsubsection{\label{sec:Conditional Likelihood of PTs}Pseudo-target Prior,\texorpdfstring{ $P(\bar{\boldmath{f}}_{1},\ldots,\bar{\boldmath{f}}_{N}\mid\mathcal{B}_{N},\bar{\mathcal{X}}^{(1)},\ldots,\bar{\mathcal{X}}^{(N)},\mathcal{X},\boldmath{\kappa})$}{Pfbar}}

The prior distribution of pseudo-targets given pseudo-inputs
and covariance function parameters is straight-forward. Following \citet{Snelson&Ghahramani2006},
the pseudo-targets 
are assumed to be a priori conditionally independent, and so
 have Gaussian distributions with prescribed kernels: 
\begin{align}
P(\bar{\boldsymbol{f}}_{1},\ldots,\bar{\boldsymbol{f}}_{N}\mid\mathcal{B}_{N},\bar{\mathcal{X}}^{(1)},\ldots,\bar{\mathcal{X}}^{(N)},\mathcal{X},\boldsymbol{\boldsymbol{\kappa}}) & =\prod_{j=1}^{N}P(\bar{\boldsymbol{f}}_{j}\mid\mathcal{B}_{N},\bar{\mathcal{X}}^{(j)},\mathcal{X},\boldsymbol{\boldsymbol{\kappa}}^{(j)})\nonumber \\
 & =\prod_{j=1}^{N}N_{m_{j}}\left(\bar{\boldsymbol{f}}_{j}\left|\boldsymbol{0}_{m_{j}},\boldsymbol{K}_{m_{j}}^{(j)}\right.\right).\label{eq: joint Pseuo-target likelihood}
\end{align}

\subsubsection{\label{sec:Conditional likelihood of PIs}Pseudo-input Prior,\texorpdfstring{ $P(\bar{\mathcal{X}}^{(1)},\ldots,\bar{\mathcal{X}}^{(N)}\mid\mathcal{B}_{N},\mathcal{X})$}{PXbar}}

The idea of the proposed pseudo-input prior is to sample pseudo-inputs uniformly within each block $B_j$ while satisfying properties P1--P3 required for the RP scheme, $\mathcal{B}_N.$  Algorithm \ref{alg:PI sampling} implements such a sampling scheme, which we now motivate.
Let the index set $\mathcal{I}_{j}$
representing the indices of  children blocks of block $B_j$, which is defined as 
\begin{align*}
\mathcal{I}_{j} & \coloneqq\{k\neq j\text{ such that }B_k\subset B_j\},
\end{align*}
and also define the collection of already selected pseudo-inputs of these child blocks
as $\mathcal{C}(B_j)\coloneqq
\cup_{k\in\mathcal{I}_{j}}\bar{\mathcal{X}}^{(k)}.$
Then, 
\begin{align*}
P(\bar{\mathcal{X}}^{(1)},\ldots,\bar{\mathcal{X}}^{(N)}\mid\mathcal{B}_{N}) & =\prod_{\ell=L}^{1}\prod_{j: B_j\in\mathcal{L}_{\ell}}P(\bar{\mathcal{X}}^{(j)}\mid\mathcal{C}(B_j))
\end{align*}
where
\begin{align*}
P(\bar{\mathcal{X}}^{(j)}\mid\mathcal{C}(B_j)) & =\prod_{i=1}^{m_{j}}P(\bar{\boldsymbol{x}}_{i}^{(j)}\mid\mathcal{C}(B_j))
\end{align*}
and
\begin{align*}
P(\bar{\boldsymbol{x}}_{i}^{(j)}\mid\mathcal{C}(B_j)) & =\text{Discrete Uniform}\left(\left\{ \boldsymbol{x}\in\mathcal{X}\subset B_j\backslash\mathcal{C}(B_j)\right\} \right).
\end{align*}
In the expression above, we essentially draw a random sample from
all those observed locations that have not been selected as pseudo-inputs
of any children components in the lower layers of the RP  scheme. 

Unlike the standard SGP approach, this allows us to capture the uncertainty of pseudo-input selection by sampling the pseudo-inputs using Algorithm \ref{alg:PI sampling}
and propagating this uncertainty to the posterior.
Alternatives such as a
continuous uniform prior over each component domain $B_j$,
or sampling accordingly to design-theoretic considerations \citep{pratola2019optimal},
are possible.  

\begin{algorithm}[t]
\caption{\label{alg:PI sampling}Sampling  
 pseudo-inputs given RP scheme $\mathcal{B}_N$.}

\setcounter{AlgoLine}{0} \SetKwData{This}{this} \SetKwInOut{Input}{Input}\SetKwInOut{Output}{Output}
\Input{ RP scheme $\mathcal{B}_N$ consisting of $N$ blocks, \; Observed
inputs $\mathcal{X}$. \; } \Output{
Sample of $\bar{\mathcal{X}}^{(j)},j=1,\ldots,N$ conditional on RP scheme $\mathcal{B}_N$. } 

Initialize $\mathcal{X}_{A}=\mathcal{X}$ as available inputs. 

\For{$j$ in $N:1$}{




Sample a random sample $\bar{\mathcal{X}}^{(j)}\subset\mathcal{X}\subset\mathbb{R}^{d}$
of size $m_{j}$ from $\mathcal{X}\cap B_j\cap\mathcal{X}_{A}$.\;

$\mathcal{X}_{A} \leftarrow \mathcal{X}_{A}\setminus\bar{\mathcal{X}}^{(j)}$ \texttt{// Remove $\bar{\mathcal{X}}^{(j)}$ sampled in the
previous step from $\mathcal{X}_{A}$.}\; 
} 
\end{algorithm}

\subsubsection{\label{sec:Priors and Hyper-parameters}Additional Prior Distributions, \texorpdfstring{$P(\boldsymbol{\kappa}\mid\mathcal{B}_N)$ and  $P(\sigma_{\epsilon}^{2})$}{Pkappa}}

We place a conjugate inverse gamma prior on the noise variance, $\sigma_{\epsilon}^{2}$,
\begin{align*}
\sigma_{\epsilon}^{2} & \sim\text{InverseGamma}(\alpha_{\epsilon},\beta_{\epsilon}).
\end{align*}
The hyper-parameters $\alpha_{\epsilon}$ and $\beta_{\epsilon}$ may be chosen as the hyper-parameters of the noise variance in traditional Bayesian GP regression. 

We assume independent priors on the scale and correlation parameters of the kernel, $\eta^{(j)}$ and $\rho^{(j)}$, 
\begin{align*}
P(\boldsymbol{\boldsymbol{\kappa}}) & =P(\boldsymbol{\boldsymbol{\kappa}}^{(1)},\ldots,\boldsymbol{\boldsymbol{\kappa}}^{(j)})
=\prod_{\ell=1}^{L}\prod_{B_j\in\mathcal{L}_{\ell}}P(\eta^{(j)}\mid\alpha_{\eta}^{l},\beta_{\eta}^{l}) P(\rho^{(j)}).
\end{align*}

The precision parameters $\eta^{(j)}$ are assumed to have gamma priors,
\begin{align*}
\eta^{(j)} & \sim\text{Gamma}(\alpha_{\eta}^{l},\beta_{\eta}^{l}),
\end{align*}
with $\alpha_{\eta}^{l},\beta_{\eta}^{l}>0,l=1,\ldots,L$. The hyper-parameters $\alpha_{\eta}^{l},\beta_{\eta}^{l}$ are the same for components within the same layer. We set up these hyper-parameters so that the variance of the response explained by the SAGP model is unequally partitioned across the $L$ layers, with components in higher layers of the partitioning scheme explaining larger portions of the variance. To facilitate the set-up of the hyper-parameters, we first normalize the observed responses $y_{1},\ldots,y_{n}$, re-centering and re-scaling so that they have mean 0 and variance 1. 
For all the components in layer $l$, we set 
\begin{align*}
\alpha_{\eta}^{l} = & \mathsf{c}_{1\eta} + 1, \\
\beta_{\eta}^{l} = & \mathsf{c}_{1\eta} (1-\mathsf{c}_{2\eta}) \mathsf{c}_{2\eta} ^{l-1},
\end{align*}
with $\mathsf{c}_{1\eta}>0$ and $0<\mathsf{c}_{2\eta}<1$. For each component $j$ in layer $l$, this choice implies that $1/\eta^{(j)}$, the marginal variance of the component, has prior mean
\begin{align*}
E[1/\eta^{(j)}] = & \frac{\beta_{\eta}^{l}}{\alpha_{\eta}^{l} - 1} = (1-\mathsf{c}_{2\eta}) \mathsf{c}_{2\eta}^{l-1}.
\end{align*}
For example, if $\mathsf{c}_{2\eta}=.1$, components on layer $l=1$ are expected to have variance $1-\mathsf{c}_{2\eta} = .9$, which is 90$\%$ of the variance of the response because of the normalization. Components on layer $l=2$ are expected to have  $(1-\mathsf{c}_{2\eta}) \mathsf{c}_{2\eta} = 0.09$, 9$\%$ of the variance of the response. Similarly, as $l$ increases, components are expected to explain smaller portions of the variance of the response. In particular, the geometric decay of the prior mean of $1/\eta^{(j)}$ is chosen so that the expected layer-specific variances add up to approximately the total response variance, which is guaranteed because, if $L$ is sufficiently big, $\sum_{l=1}^L  (1-\mathsf{c}_{2\eta}) \mathsf{c}_{2\eta}^{l-1} \approx 1$. The other hyper-parameter $\mathsf{c}_{1\eta}$ controls the spread of the prior distributions of $1/\eta^{(j)}$, with larger values of $\mathsf{c}_{1\eta}$ imposing a tighter constraint to the prior mean. In our experience, values of $\mathsf{c}_{2\eta} = .1$ and $\mathsf{c}_{1\eta}$ between 10 and 50 appear to provide the best results in our applications.

We set the prior distributions on the parameters $\rho^{(j)}$ in the following way. First of all, we assume that the inputs $\boldsymbol{x}_i$'s have been appropriately scaled, so that the domain $\mathsf{X}$ is mapped into the unit cube $[0,1]^d$. This facilitates the definition of priors for $\rho^{(j)}$. Second, as for the $\eta^{(j)}$, we assume the same prior distribution for parameters corresponding to components in the same layer $l$. Third, we adopt a structure of priors imposing smoother behavior for components in the top layers of the partitioning scheme. In other words, we impose a structure of priors where $\rho^{(j)}$ is expected to be greater than $\rho^{(j')}$ if component $j$ belongs to a layer on a higher level than the layer of component $j'$. Despite a family of beta priors on the $\rho^{(j)}$ may be tuned to satisfy these properties, we empirically observed that setting the values of these parameters to fixed layer-specific constants $\rho_{l}$ (i.e., $P(\rho^{(j)}) = \delta_{\rho_{l}}$, the Dirac delta function on $\rho_{l}$) worked as well but was computationally less expensive. To get the sense on how the values of $\rho_{l}$ affect the layer-specific correlations, one may plot the correlation for two responses as a function of the distance of their inputs, as specified by Equation (\ref{eq:kernel function}). We provide this plot in Appendix \ref{sec:plot_correlation}, in the case $L=5$ and using the values $\rho_{1}=10^{-1}$, $\rho_{L}=10^{-50}$ and the
intermediate values $\rho_{l},l=2,\ldots,L-1$ to be equally spaced
between $\rho_{1}$ and $\rho_{L}$ on the logarithm (base 10) scale. Even though these values of $\rho_{l}$ may appear to quickly become excessively small, the sizes of the subdomains where the components are defined (i.e., the blocks $B_j$) shrink as $l$ increases. In our numerical example, if we consider a one-dimensional case with two inputs $\boldsymbol{x}$ and $\boldsymbol{x}'$ at distance 0.0625 (i.e., the largest distance between two points in one block on the fifth layer), the assumed correlations on components on layer $l = 1$ to 5 are 0.99, 0.89, 0.80, 0.71 and 0.64, respectively. Notably, the decay of such values depends on the number of layers $L$, which can be tuned using prior beliefs and the information in the data. In our applications, trading the conventional estimation of the parameters $\rho^{(j)}$ with a set of fixed $\rho_{l}$ and a data-driven selection of $L$ via cross-validation (see Section \ref{sec:Specification of Priors and Hyper-parameters}) resulted in sufficiently flexible models.

\subsubsection{\label{sec:Posterior for PTs}Full Conditional Distribution of Pseudo-targets }

In order to implement an MCMC algorithm for SAGP, we apply Bayes theorem on the pseudo-inputs $\bar{\boldsymbol{f}}_{j}$ in order
to yield its full conditional distribution from (\ref{eq:multiple obs pred})
and (\ref{eq: joint Pseuo-target likelihood}) and the conditional
independence assumption, 
\begin{align}
 & P(\bar{\boldsymbol{f}}_{j}\mid\boldsymbol{y},\mathcal{X},\bar{\boldsymbol{f}}_{1},\ldots,\bar{\boldsymbol{f}}_{j-1},\bar{\boldsymbol{f}}_{j+1},\ldots\bar{\boldsymbol{f}}_{N},\mathcal{B}_{N},\bar{\mathcal{X}}^{(1)},\ldots,\bar{\mathcal{X}}^{(N)},\boldsymbol{\boldsymbol{\kappa}},\boldsymbol{\sigma}_{\epsilon}^{2})\nonumber \\
 & \propto P(\boldsymbol{y}\mid\bar{\boldsymbol{f}}_{1},\ldots\bar{\boldsymbol{f}}_{N},\mathcal{B}_{N},\bar{\mathcal{X}}^{(1)},\ldots,\bar{\mathcal{X}}^{(N)},\boldsymbol{\boldsymbol{\kappa}},\boldsymbol{\sigma}_{\epsilon}^{2})\times\nonumber \\
 & P(\bar{\boldsymbol{f}}_{j}\mid\mathcal{X},\mathcal{B}_{N},\bar{\mathcal{X}}^{(j)},\boldsymbol{\boldsymbol{\kappa}},\boldsymbol{\sigma}_{\epsilon}^{2})\nonumber \\
 & =N_{n}\left(\left.\boldsymbol{r}_{j}\right|\boldsymbol{K}_{nm_{j}}^{(j)}\left(\boldsymbol{K}_{m_{j}}^{(j)}\right)^{-1}\bar{\boldsymbol{f}}_{j},\boldsymbol{\Lambda}_{n}^{(j)}+\sigma_{\epsilon}^{2}\boldsymbol{I}_{n}\right)\times
 N_{m_{j}}\left(\left.\bar{\boldsymbol{f}}_{j}\right|\boldsymbol{0}_{m_{j}},\boldsymbol{K}_{m_{j}}^{(j)}\right),\label{eq:Posterior of pseudo-targets}
 \end{align}
where $\boldsymbol{r}_{j}=\boldsymbol{y}-\sum_{l\neq j}\boldsymbol{K}_{nm_{l}}^{(l)}\left(\boldsymbol{K}_{m_{l}}^{(l)}\right)^{-1}\bar{\boldsymbol{f}}_{l}.$
Using normal-normal conjugacy, we can identify the mean and variance
of this normal distribution, $\bar{\boldsymbol{f}}_{j}\mid\boldsymbol{Mean}_{j},\boldsymbol{Var}_{j}$,
where (see Appendix \ref{sec:Detailed Derivation of Posterior Distribution})

\begin{align}
\boldsymbol{Mean}_{j} & =\boldsymbol{K}_{m_{j}}^{(j)}\boldsymbol{Q}_{m_{j}}^{(j)-1}\boldsymbol{K}_{m_{j}n}^{(j)}\left(\boldsymbol{\Lambda}_{n}^{(j)}+\sigma_{\epsilon}^{2}\boldsymbol{I}_{n}\right)^{-1}\boldsymbol{r}_{j},\nonumber \\
\boldsymbol{Var}_{j} & =\boldsymbol{K}_{m_{j}}^{(j)}\boldsymbol{Q}_{m_{j}}^{(j)-1}\boldsymbol{K}_{m_{j}}^{(j)},\nonumber \\
\text{and }\boldsymbol{Q}_{m_{j}}^{(j)}= & \boldsymbol{K}_{m_{j}}^{(j)}+\boldsymbol{K}_{m_{j}n}^{(j)}\left(\boldsymbol{\Lambda}_{n}^{(j)}+\sigma_{\epsilon}^{2}\boldsymbol{I}_{n}\right)^{-1}\boldsymbol{K}_{nm_{j}}^{(j)}.\label{eq:PT posterior closed form}
\end{align}
Although we still need to invert an $n\times n$ matrix $\boldsymbol{\Lambda}_{n}^{(j)}+\sigma_{\epsilon}^{2}\boldsymbol{I}_{n}$,
it is a diagonal matrix and hence its computational cost will be $\mathcal{O}(n)$.

\subsubsection{\label{subsec:Full-Conditional-Distribution}Full Conditional Distribution
of Noise Variance}

As we mentioned in the previous section, we want to make use of the
Gaussian-inverse gamma conjugacy. For the observation of sample size
$n$, by conjugacy, the distribution $\sigma_{\epsilon}^{2}$ is again
inverse gamma,
\begin{align*}
& P(\sigma_{\epsilon}^{2}  \mid\boldsymbol{y},\mathcal{X},\bar{\boldsymbol{f}}_{1},\ldots\bar{\boldsymbol{f}}_{N},\mathcal{B}_{N},\bar{\mathcal{X}}^{(1)},\ldots,\bar{\mathcal{X}}^{(N)},\boldsymbol{\boldsymbol{\kappa}}) =\\
& \text{InverseGamma}\left(\alpha_{\epsilon}+\frac{n}{2},\beta_{\epsilon}+\frac{1}{2}(\boldsymbol{y}-\hat{\boldsymbol{y}})^{T}(\boldsymbol{y}-\hat{\boldsymbol{y}})\right),
\end{align*}
where $\hat{\boldsymbol{y}}\coloneqq\sum_{j=1}^{N}\boldsymbol{K}_{nm_{j}}^{(j)}\left(\boldsymbol{K}_{m_{j}}^{(j)}\right)^{-1}\bar{\boldsymbol{f}}_{j}$
is the ``fitted value'' from the SAGP model. We can directly sample
this parameter using a Gibbs step.

\subsubsection{\label{sec:Sampling Algorithm}Sampling Algorithm}

SAGP is fitted by a Metropolis-Hastings Markov chain Monte Carlo (MCMC)
algorithm \citep{Gelfand&Smith1990}. For each
additive component of the model, we have to use the partial residuals
$\boldsymbol{r}_{j}$ defined in (\ref{eq:Posterior of pseudo-targets})
as data. This step is from the back-fitting scheme designed for fitting
additive Bayesian models \citep{Hastie&Tibshirani2000}.

From the likelihood derivations presented in section \ref{sec:Posterior for PTs},
we know that $\bar{\boldsymbol{f}_{j}}$ can be directly sampled from
their conditional distributions for each $j=1,\ldots,N$ components.
The difficulty in this step is to compute the $\boldsymbol{Mean}_{j},\boldsymbol{Var}_{j}$
in (\ref{eq:PT posterior closed form}). As mentioned earlier, the
main computational cost occurs in the inversion of the covariance
matrices in section \ref{sec:Conditional Likelihood of FULL OBSERVATIONS_FULL INPUTS},
which has been reduced compared to a full GP covariance matrix. Numerical
instability in inversion of these matrices may cause additional problems,
so we adopt the Cholesky decomposition method with diagonal perturbation
to solve this instability problem as in \citet{Rasmuessen&Williams2006}.
For each $\eta_{j}$ we do not have normal conjugacy, therefore an
adaptive Metropolis-Hasting step is used for sampling $\eta_{j}$
\citep{Banerjee2012}.

The advantage of using such a fully Bayesian model is that the uncertainty
quantification comes naturally with the posterior samples from the
sampler. Our posterior inference below can be based on all these posterior
samples. The algorithm for overall sampling is presented in Algorithm
\ref{alg:MCMC-algorithm} in the Appendix~\ref{sec:MCMC-Algorithm-for}.

\subsubsection{\label{sec:Specification of Priors and Hyper-parameters}Tuning Parameters
and Complexity}

The trade-off between number of layers $L$ in the RP scheme and the
number of pseudo-inputs $m$ is central to the SAGP model.
On one hand, in SGP modeling \citep{Snelson&Ghahramani2006,Snelson&Ghahramani2007,Lee_etal2017},
we need to
increase the number of pseudo-inputs $m$ to get a better fit of the SGP model. On the other hand, for
regression tree partitionining models \citep{Chipman1998,Chipman2016,Pratola2017},
the more additive components a model has, the better fit we can expect.
In the SAGP model, increasing both factors (number of pseudo-inputs,
$m,$ and number of layers, $L,$)
would certainly improve the overall fit, but the interesting observation
is that there exists a trade-off between these two tuning parameters. Increasing the number of layers $L$ may counter-act the effect
of decreasing the number of pseudo-inputs $m$, and vice versa. Theoretically,
we can tune the choice of number of layers using cross-validation
(see Figure \ref{fig:CV_HR}). Practically, we can usually choose reasonable
$m$ and $L$ depending on the desired granularity of the RP scheme.

This trade-off between $m$ and $L$ can also be observed by considering
the model's computational complexity. We already mentioned above that for a full GP
model based on $\mathcal{X}$ the complexity is of order $\mathcal{O}(n^{3})$;
for an SGP model with $m\ll n$ pseudo-inputs selected from $\mathcal{X},$
the complexity is of order $\mathcal{O}(nm^{2})$ \citep{Snelson&Ghahramani2006}.
Since each additive component in our SAGP model is essentially
an SGP model,  the overall complexity is given by the following
proposition. \begin{prop} \label{prop:For-an-RP}For an RP scheme
on input-domain $[0,1]^{d}$, with $b_{i}$-ary tree \citep{storer2012introduction}
in the $i$-th dimension, the complexity of fitting an $L$-layer
SAGP model with $m$ pseudo-inputs for each block and an overall sample
of size $n$ is at most  $\mathcal{O}\left(\sum_{\ell=1}^{L}\prod_{i=1}^{d}b_{i}^{\ell-1}\cdot n\cdot m^{2}\right)$.
\end{prop}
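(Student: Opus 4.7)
The plan is to decompose the total cost into a sum over RP components, use the fact that each additive component is an SGP whose fitting cost is the standard $\mathcal{O}(nm^{2})$, and then count components layer by layer using the tree branching factors $b_{i}$.

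First I would establish the per-component cost. From the full conditional expressions in Section \ref{sec:Posterior for PTs}, sampling $\bar{\boldsymbol{f}}_{j}$ requires computing $\boldsymbol{Mean}_{j}$ and $\boldsymbol{Var}_{j}$ in (\ref{eq:PT posterior closed form}), whose dominant operations are: (i) inverting the diagonal $n\times n$ matrix $\boldsymbol{\Lambda}_{n}^{(j)}+\sigma_{\epsilon}^{2}\boldsymbol{I}_{n}$, which costs $\mathcal{O}(n)$; (ii) forming the $m_{j}\times m_{j}$ matrix $\boldsymbol{Q}_{m_{j}}^{(j)}=\boldsymbol{K}_{m_{j}}^{(j)}+\boldsymbol{K}_{m_{j}n}^{(j)}(\boldsymbol{\Lambda}_{n}^{(j)}+\sigma_{\epsilon}^{2}\boldsymbol{I}_{n})^{-1}\boldsymbol{K}_{nm_{j}}^{(j)}$, dominated by the $m_{j}\times n$ by $n\times m_{j}$ product at cost $\mathcal{O}(nm_{j}^{2})$; and (iii) factoring/inverting $\boldsymbol{Q}_{m_{j}}^{(j)}$ at cost $\mathcal{O}(m_{j}^{3})$. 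With $m_{j}=m\ll n$, the bottleneck per component is $\mathcal{O}(nm^{2})$, agreeing with the complexity of a single SGP as stated in Section \ref{sec:Sparsification}.

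Next I would count components per layer. By the construction of the RP scheme, the root layer $\mathcal{L}_{1}$ contains a single component, and each component in layer $\ell-1$ is split into $\prod_{i=1}^{d}b_{i}$ children via the $b_{i}$-ary subdivision along the $i$-th dimension. A straightforward induction on $\ell$ yields $|\mathcal{L}_{\ell}|=\prod_{i=1}^{d}b_{i}^{\ell-1}$ for the complete (unpruned) tree.

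Finally I would sum the per-component cost over all components in every layer: the total cost is bounded by $\sum_{\ell=1}^{L}|\mathcal{L}_{\ell}|\cdot\mathcal{O}(nm^{2})=\mathcal{O}\bigl(\sum_{\ell=1}^{L}\prod_{i=1}^{d}b_{i}^{\ell-1}\cdot n\cdot m^{2}\bigr)$, which is exactly the claimed bound. There is essentially no hard step here; the only subtlety worth flagging is that the actual SAGP uses a \emph{pruned} RP scheme with $N'\le\sum_{\ell}\prod_{i}b_{i}^{\ell-1}$ active components, so the stated expression is a worst-case bound that is achieved when no pruning occurs. I would also note that the per-component cost is unaffected by how many observations lie inside the block $B_{\mu_{j}}(\sigma_{j})$, because the backfitting residual $\boldsymbol{r}_{j}$ is an $n$-vector and the cross-covariance $\boldsymbol{K}_{nm_{j}}^{(j)}$ is $n\times m_{j}$ for every component, so the $\mathcal{O}(nm^{2})$ bound is uniform across layers.
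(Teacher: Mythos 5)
Your proposal is correct and follows essentially the same route as the paper's proof in Appendix \ref{sec:Proof-of-Proposition}: a per-component SGP cost of $\mathcal{O}(nm^{2})$ (the paper simply notes $n_{B}=n$ observations per component, which you justify in more detail via the structure of $\boldsymbol{Q}_{m_{j}}^{(j)}$ and the diagonal $\boldsymbol{\Lambda}_{n}^{(j)}+\sigma_{\epsilon}^{2}\boldsymbol{I}_{n}$), a layer count of $|\mathcal{L}_{\ell}|=\prod_{i=1}^{d}b_{i}^{\ell-1}$, and a sum over layers. Your added remarks about pruning making this a worst-case bound, and about why the cost is uniform across layers, are accurate refinements rather than departures from the paper's argument.
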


\begin{proof} See Appendix \ref{sec:Proof-of-Proposition}. \end{proof}
For $N=1$, where there is only one layer and one component, this complexity reduces to SGP complexity
with $m$ pseudo-inputs. We will revisit this \emph{component number
pseudo-input trade-off} in our data analyses.

\section{\label{sec:Simulation Study}Simulation Study}

\subsection{Design}

To evaluate the performance of our methodology and compare it to competing
approaches, we run a family of simulations. We focus on the one-dimensional
case ($d=1$) and we simulate from a GP with mean function 
\begin{equation}
f(x)=-5-6x^{3}+30(x-.5)^{2}+3\exp(2x-1)+3x^{2}\sin(12\pi x)+\cos(6\pi x),\label{eq:simulated dta formula}
\end{equation}
which is represented in Figure \ref{fig:exSim} in the interval {[}0,1{]}.
We generate a sample of $n=200$ locations from a uniform distribution
on $[0,1]$ and we define the observed responses as $y_{i}=f(x_{i})+\epsilon_{i}$
for all $x_{i}\in\mathcal{X}$, with $\epsilon_{i}\sim N_{1}(0,0.1)$.
The data are split into training and testing sets, with sizes 150
and 50, respectively. We consider two scenarios. In the first scenario,
the testing set is selected at random. In the second scenario, the
testing set is chosen as the subset with 50 data points with $x_{i}$
closest to a point randomly chosen in $[0.25,0.75]$. Figure \ref{fig:exSim}
shows an example for each of these scenarios.

\begin{figure}[h!]
\centering \includegraphics[width=1\textwidth]{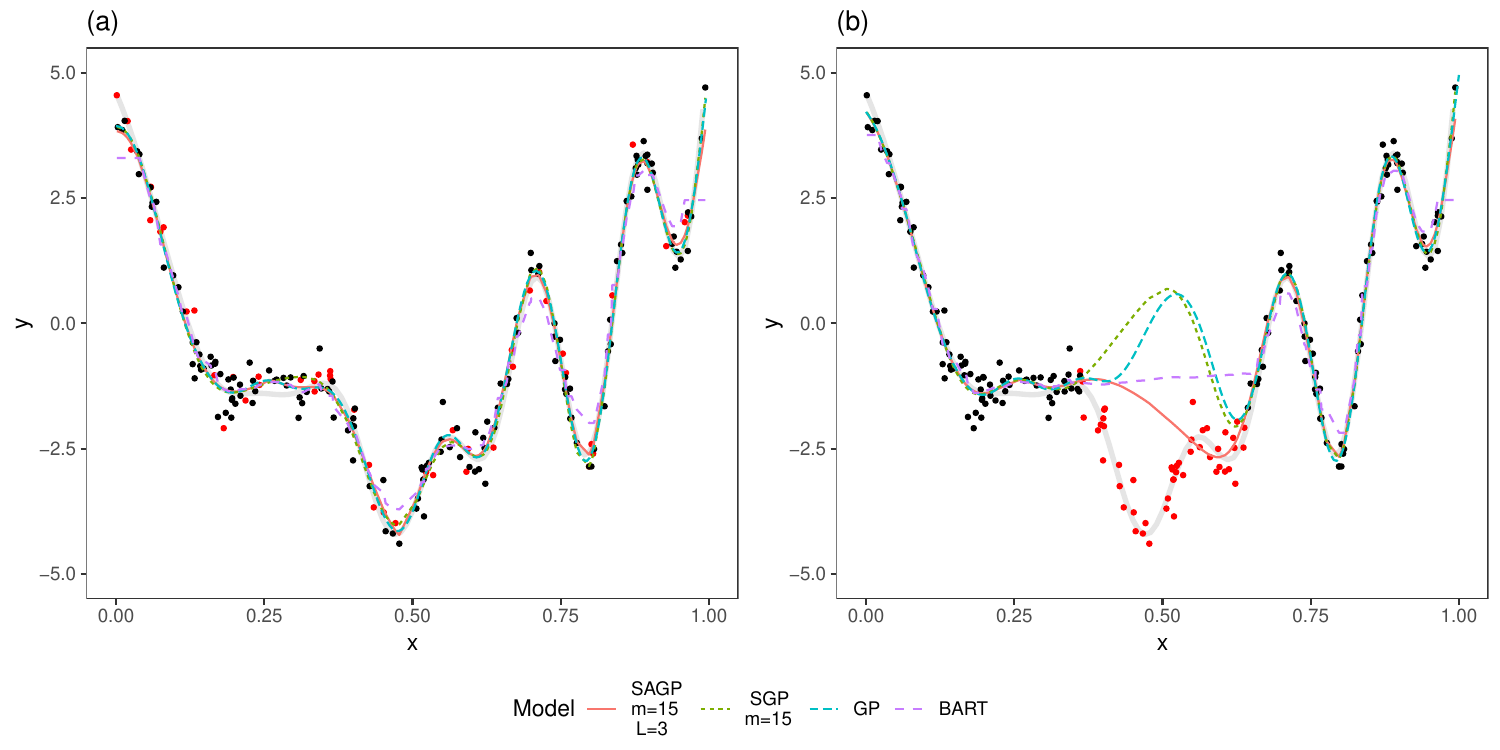}
\caption{Example of data generated in the simulation study. The gray, bold,
curve represents the true mean function $f(x)$. Training and testing
sets are represented as black and red points. Panels (a) and (b) show
the scenarios where the 50 data points of the testing set are chosen at random or as the input location that is closest to a randomly chosen point (0.5 in the example), respectively. The posterior
predictive functions of four models, fit on the training portion of
the data, are provided in both panels. \label{fig:exSim}}
\end{figure}

We generate 1000 datasets for each scenario (random and interval testing set). In each dataset, we fit the
SAGP model with three configurations of $m,L$: (i) $m=5$, $L=4$;
(ii) $m=10$, $L=3$; (iii) $m=15$, $L=3$. We compare the SAGP models
to the following methods: 
\begin{itemize}
\item Full GP regression. We use the implementation of GP regression model
in the R package \texttt{$\mathtt{DiceKriging}$} by \citet{roustant2012dicekriging}. 
\item SGP regression. We consider the choices $m=5$, $m=10$ and $m=15$
and use the implementation of SGP in the Matlab package implementation $\mathtt{SPGP}$ at  \url{http://www.gaussianprocess.org}
accompanying the paper by \citet{Snelson&Ghahramani2006}. 
\item Bayesian Additive Regression Trees (BART) \citet{Chipman1998,Chipman_etal2010,Pratola2017}.
We used the default number of trees as specified in \citet{Chipman_etal2010}
and the implementation at \url{http://bitbucket.org/mpratola/openbt}. 
\end{itemize}
For each generated dataset, the models are
fit on the training data and used to predict the response on the testing data. For each point in the testing set, we compute the
estimated mean function $\hat{y}(x_{i})$ (see Section \ref{subsec:Full-Conditional-Distribution})
and the 95$\%$ prediction interval (PI) for $y_{i}$. The performance
of the estimators of the mean function is evaluated in terms of root mean
squared error (RMSE). To assess the appropriateness
of the uncertainty quantification, we compute the coverage of the
PIs and compare it to the nominal prediction level. Finally, we compare
the methods in terms of average value of interval score, which is
a summary measure to assess the quality of prediction intervals \citep{gneiting2007strictly}.
Given a $(1-\alpha)100\%$ PI for $y_{i}$ with extremes $(l_{i},u_{i})$,
the interval score at $y_{i}$ is defined as

\[
s_{\alpha}(l_{i},u_{i};y_{i})=(u_{i}-l_{i})+\frac{2}{\alpha}(l_{i}-y_{i})\boldsymbol{1}(y_{i}<l_{i})+\frac{2}{\alpha}(y_{i}-u_{i})\boldsymbol{1}(y_{i}>u_{i}).
\]
We choose this metric to jointly evaluate a family of intervals in
terms of precision (i.e. the width of the intervals) and accuracy
(i.e., the coverage of the true value). Notably, low values of the score indicate good performance.

\subsection{Results}

Figure \ref{fig:resultSim} summarizes the resulting RMSEs, PI coverages
and averages of the interval scores across the 1000 generated
datasets for the two scenarios.

\begin{figure}[h!]
\centering \includegraphics[width=1\textwidth]{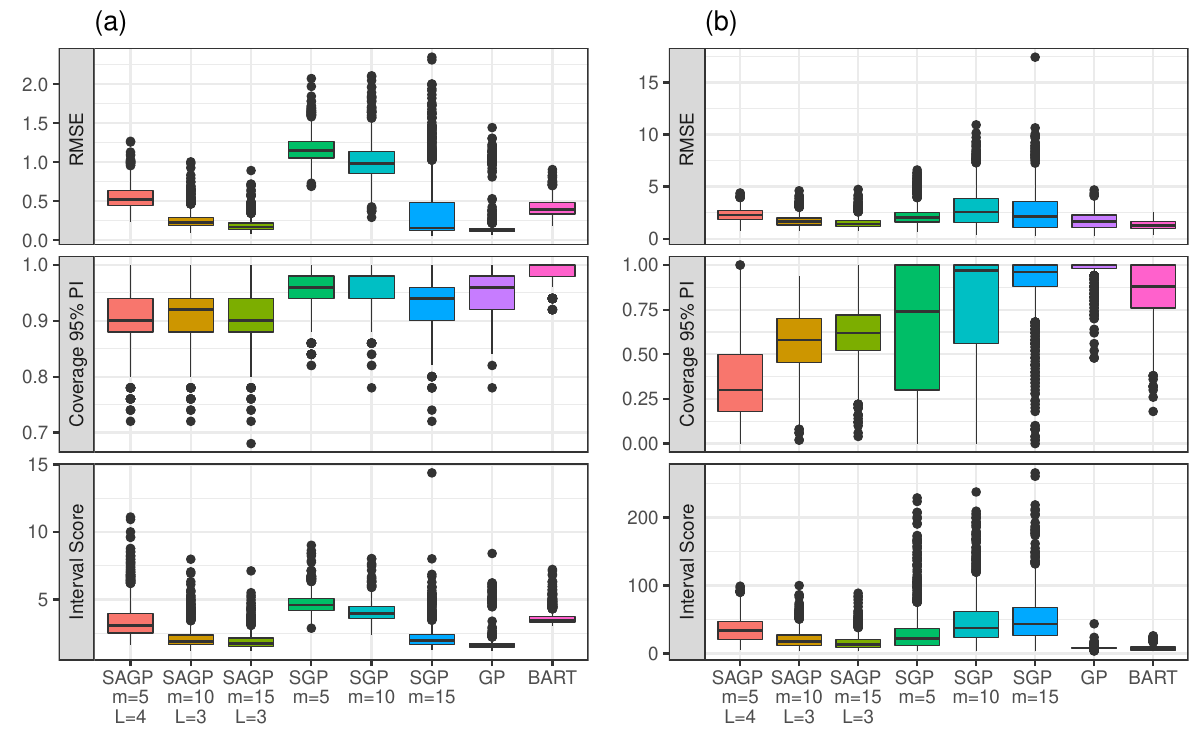}
\caption{RMSE (top panels), coverage (central panels)
and interval score (bottom panels) over the 1000 simulated datasets.
Panel (a) shows the results in the case where the testing
set is chosen at random over $[0,1]$.
Panel (b) shows the results in the case where the testing
set is chosen in a random interval with center uniformly selected from $[0.25,0.75]$. \label{fig:resultSim}}
\end{figure}

Panel (a) provides the results
in the scenario where the testing set is selected at random. In terms of RMSE,
both the SAGP and SGP models perform better with larger values of
$m$. As expected, the full GP model attains the smallest RMSEs. The
SAGP models with $m=5$ and 10 perform better than the SGP models
with the same number of pseudo-inputs. For $m=15$, the median RMSEs
in the SAGP and SGP models are similar, but the performance of the
SAGP model is more consistent across simulations (the upper quartile
of SAGP with $m=15$ is considerably smaller than the one of SGP with
$m=15$). With the considered configuration of the parameters, the
BART model performs slightly better than the SAGP model with $m=5$,
but worse than the SAGP model with $m=10$ and $m=15$. The coverage of the 95$\%$ PIs is close to the nominal level for
all the methods except for BART. The PIs of the SAGP model appear
to be slightly too narrow, as most of the coverages are a little lower
than .95. SGP and GP models show coverages perfectly matching the
nominal value. However, the BART model produces overly wide PIs, as
the median coverage is 100$\%$. The ranking of the methods in terms of interval score is similar to
the one based on the RMSE. Again, better performances are attained
by SAGP and SGP models with larger values of $m$. The SAGP models
with $m=10$ and $m=15$ perform better than all the other methods,
except for the full GP model.

Panel (b) provides the results
of the simulations in the scenario where the testing set is an interval
with random mid-point. Notably, this prediction problem is much harder
than the one evaluated by the previous scenario, as the models are
forced to a certain degree of extrapolation due to the lack of data.
BART, GP and SAGP with $m=15$ attain the best performance in terms of RMSE. Overall, the SAGP model seem to perform better than SGP. The coverage is suboptimal for all the methods, being much lower (undercoverage) than expected for SAGP and SGP with $m=5$ and higher (overcoverage) for GP. A wide range of coverages is observed for all the other methods. With respect
to the interval score, GP and BART are the methods that appear to perform best. Among the SAGP and SGP models, SAGP with $m=15$ is the best performing and competitive with GP and BART. 

\subsection{Computational Details}

As for any Bayesian model that is fit using MCMC algorithms, the convergence to the stationary distribution must be investigated also for the SAGP model. In our specific implementation, we discard the first 10,000 samples as burn-in and keep the following 1,000 samples to compute posterior estimates. We monitor the convergence of $\sigma^2_\epsilon$, sampled with Gibbs steps, and of the parameters $\eta^{(j)}$, which are sampled with Metropolis-Hastings steps with an adaptive choice of the bandwidth of the proposal distribution to control the acceptance rate. The considered SAGP models turned out to mix well and reasonably fast on the basis of trace-plots of the parameters (not shown) and the diagnostics suggested by \citet{Gelman2003}, which are provided in Appendix \ref{sec:Sim-1D diagnostic}. Notably, in our experience, similar satisfying
mixing diagnostic for the SAGP model may be achieved with much fewer steps than 10,000. 

\begin{table}[ht]
\centering
\caption{Computation time needed to fit the SAGP model on 1,000 simulated datasets on a 40-core cluster.}
\label{tab:timingSimulations}
\begin{tabular}{cccc}
\toprule 
$m$ & $L$ & Testing set & CPU time (hh:mm:ss) \tabularnewline
\midrule
\midrule 
10 & 3 & Random & 106:53:21 \tabularnewline
10 & 3 & Interval & 107:56:21\tabularnewline
\midrule 
5 & 4 & Random & 241:17:31 \tabularnewline
5 & 4 & Interval & 175:45:35\tabularnewline
\midrule 
15 & 3 & Random & 477:22:52 \tabularnewline
15 & 3 & Interval & 479:06:39\tabularnewline
\bottomrule
\end{tabular}
\end{table}

With respect to the computation time, setting the burn-in size to 10,000 and the size of posterior samples to 1,000, an SAGP model can be fit on one dataset of size $n=200$ in 3 to 5 minutes, depending
on the configuration of $m$ and $L$, using a laptop
with an Intel Core-i5 2.30GHz processor. The time that was needed to fit the model on one batch of 1,000 simulated datasets 
are  summarized in  Table \ref{tab:timingSimulations}.

\section{\label{sec:Real-world-Data-Applications}Real Data Applications}
In this section, the proposed model is applied to real data. We considered four datasets that differ in terms of sample size and number of predictors:
\begin{itemize}
    \item Heart rate data: $n=1,664$, $d=1$;
    \item Temperature data: $n=247$, $d=2$;
    \item Ice Sheet data: $n=2,226$, $d=2$;
    \item UK Housing data: $n=1,519$ and $d=8$.
\end{itemize}
The performance is evaluated quantitatively with the out-of-sample RMSE, coverage of 95$\%$ PIs and average interval score on a 25\% test set. Our model is compared to other popular methods. We considered two Bayesian models: BART and Bayesian CART (BCART) \citep{Chipman1998}, implemented in the $\mathtt{BayesTree}$ package on CRAN (version 0.3-1.4). We also considered two frequentist models: full GP and Local Approximate GP (laGP) \citep{gramacy2016lagp}, implemented in the $\mathtt{laGP}$ package on CRAN (version 1.5-5). 
For the $d=1$ and $d=2$ datasets, we also provide a qualitative assessment of the fits via graphical plots. 

\begin{table}[ht]
\label{tab:bigtable}
\vspace{-1cm}
\centering
\caption{Performance of SAGP model and of other competing methods on four datasets.}
\begin{tabular}{cccccc}
	\hline
Dataset & Model & Details & RMSE & Coverage (\%) & \makecell{Avg. Int. Score \\($\log_{10}$ scale)} \\
	\hline
\multirow{10}{*}{\makecell{Heart Rate \\ ($n$=1,664, $d$=1)}}
& SAGP & $L$=3, $m$=10 & 1.340$\times10^2$ & 88.8 & 2.800 \\
& SAGP & $L$=4, $m$=5 & 1.342$\times10^2$ & 89.5 & 2.796 \\
& GP & - & 2.727$\times10^2$ & 12.0 & 3.855 \\
& SGP & $m$=5 & 1.366$\times10^2$ & 100.0 & 4.605 \\
& SGP & $m$=15 & 1.347$\times10^2$ & 100.0 & 4.633 \\
& SGP & $m$=150 & 1.325$\times10^2$ & 100.0 & 4.720 \\
& laGP & ALC & 1.325$\times10^2$ & 91.7 & 2.777 \\
& laGP & MSPE & 1.879$\times10^2$ & 91.1 & 2.821 \\
& BART & - & 1.331$\times10^2$ & 18.1 & 3.494 \\
& BCART & - & 1.355$\times10^2$ & 94.7 & 2.751 \\
	\hline
\multirow{10}{*}{\makecell{Temperature \\ ($n$=247, $d$=2)}}
& SAGP & $L$=2, $m$=5 & 3.412$\times10^0$ & 79.7 & 1.345 \\
& SAGP & $L$=4, $m$=10 & 2.910$\times10^0$ & 77.4 & 1.356 \\
& GP & - & 3.041$\times10^0$ & 92.5 & 1.228 \\
& SGP & $m$=5 & 3.401$\times10^0$ & 100.0 & 2.010 \\
& SGP & $m$=15 & 3.345$\times10^0$ & 100.0 & 2.033 \\
& SGP & $m$=150 & 3.043$\times10^0$ & 100.0 & 1.709 \\
& laGP & ALC & 3.206$\times10^0$ & 86.6 & 1.247 \\
& laGP & MSPE & 3.431$\times10^0$ & 85.3 & 1.273 \\
& BART & - & 3.123$\times10^0$ & 52.3 & 1.658 \\
& BCART & - & 3.432$\times10^0$ & 90.8 & 1.195 \\
	\hline
\multirow{11}{*}{\makecell{Ice Sheet \\ ($n$=2,226, $d$=2)}}
& SAGP & $L$=3, $m$=10 & 1.944$\times10^2$ & 89.6 & 3.048 \\
& SAGP & $L$=3, $m$=15 & 1.858$\times10^2$ & 89.1 & 3.038 \\
& SAGP & $L$=4, $m$=5 & 2.126$\times10^2$ & 89.7 & 3.073 \\
& GP & - & 0.766$\times10^2$ & 93.8 & 2.570 \\
& SGP & $m$=5 & 2.575$\times10^2$ & 100.0 & 5.638 \\
& SGP & $m$=15 & 2.278$\times10^2$ & 100.0 & 5.841 \\
& SGP & $m$=150 & 1.637$\times10^2$ & 100.0 & 6.365 \\
& laGP & ALC & 1.672$\times10^2$ & 88.7 & 2.892 \\
& laGP & MSPE & 1.715$\times10^2$ & 88.7 & 2.894 \\
& BART & - & 1.532$\times10^2$ & 49.9 & 3.322 \\
& BCART & - & 2.231$\times10^2$ & 91.3 & 3.026 \\
	\hline
\multirow{14}{*}{\makecell{UK Budget \\ ($n$=1,519, $d$=8)}}
& SAGP & $L$=2, $m$=10 & 3.486$\times10^1$ & 92.3 & 2.327 \\
& SAGP & $L$=2, $m$=15 & 3.370$\times10^1$ & 92.8 & 2.312 \\
& SAGP & $L$=3, $m$=10 & 3.478$\times10^1$ & 92.2 & 2.327 \\
& SAGP & $L$=3, $m$=15 & 3.366$\times10^1$ & 92.7 & 2.313 \\
& GP & - & 3.105$\times10^1$ & 94.2 & 2.245 \\
& SGP & $m$=5 & 3.087$\times10^1$ & 5.0 & 2.904 \\
& SGP & $m$=15 & 3.053$\times10^1$ & 13.9 & 2.855 \\
& SGP & $m$=150 & 3.112$\times10^1$ & 49.0 & 2.647 \\
& laGP & ALC & 4.459$\times10^1$ & 55.3 & 2.679 \\
& laGP & MSPE & 4.529$\times10^1$ & 54.7 & 2.685 \\
& BART & - & 3.065$\times10^1$ & 48.1 & 2.605 \\
& BCART & - & 3.613$\times10^1$ & 92.4 & 2.286 \\
\hline
\end{tabular}
\end{table}
\subsection{Heart Rate Data}

The heart rate (HR) dataset we study here can be used to evaluate
the level of physical preparation and design training/rehabilitation
activities \citep{Zakynthinaki2015}. 
%
In this study, a single runner was asked to run on a treadmill
at constant speed. The HR (in beats/minute) was recorded for about
7 minutes from the beginning of the exercise. After the exercise,
the HR of the subject was measured for about 10 minutes during the
recovery. The experiment was repeated four times, varying the speed
of the exercise ($v=13.4,14.4,15.7$ and 17 km/h). For our illustrative
purposes, we use the data of the exercise performed at speed $v=13.4$
km/h, which are graphically represented in Figure \ref{fig:fit_HR}.

We consider SAGP models with $m=5$ and $m=10$ pseudo-inputs, and use 10-fold
cross-validation to select the number of layers $L$ as shown in Figure \ref{fig:CV_HR}(a).
This plot demonstrates the 
trade-off between the values of pseudo imputs $m$ and the number
of layers $L$, with $L=3, m=10$ being a good choice.
The resulting fitted SAGP model, which consists of 7 additive components, is
summarized in Figure \ref{fig:fit_HR}, both in terms of how the fit is decomposed by layer in panel (a) and the overall fit shown in panel (b).  An alternative fit with
$L=4, m=5$ is provided in Appendix \ref{sec:Heart-Rate-Dataset}.

\begin{figure}[t]
\centering

\includegraphics[width=0.8\textwidth]{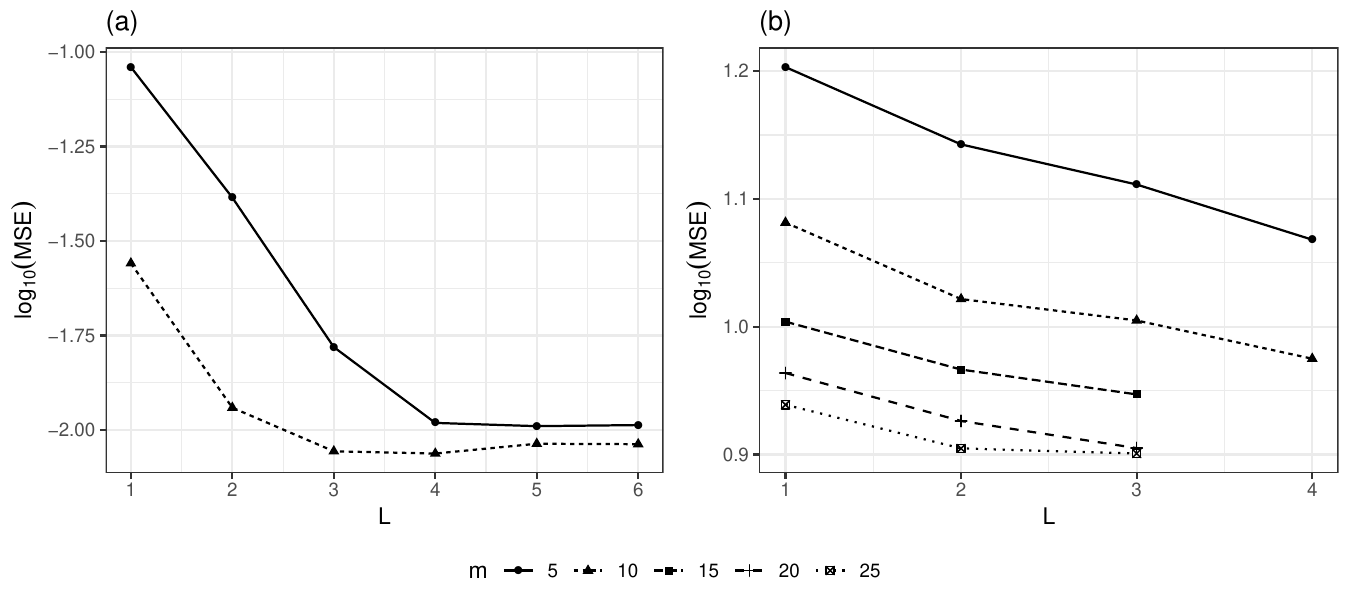} 

\caption{(a) Out-of-sample MSE (on $\log_{10}$ scale) attained by different
models fitted on 1 dimensional heart-rate dataset with $m=5,10$ and
$L=1,\ldots,6$. \newline
 (b) Out-of-sample MSE (on $\log_{10}$ scale) attained by different
models fitted on 2 dimensional temperature dataset with $m=5,10,15,20,25$
and $L=1,\ldots,4$. For any $L>4$, our pruning algorithm \ref{alg:pruning-algorithm-1}
will reduce it to $L=4$; for $m=25$ our pruning algorithm will reduce SAGP model to $L=3$
 \label{fig:CV_HR}
 }
\end{figure}

\begin{figure}[t]
\centering

\includegraphics[width=1\textwidth]{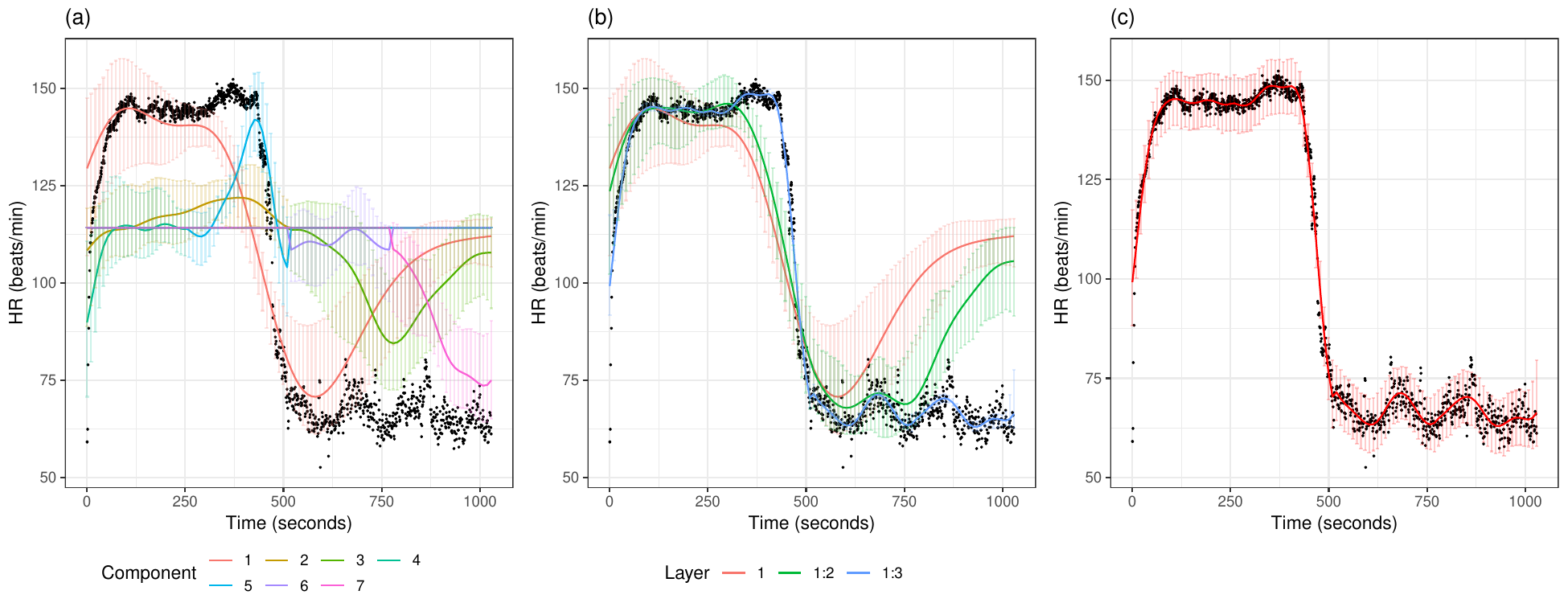} 

\caption{The panels show the observed HR values over time as black dots and
results about the fit of the SAGP model with $m=10$ and $L=3$. Panel
(a) shows the posterior means and the 95\% CIs of the 7 additive components
of the SAGP model on 100 equispaced locations on the support of the
data. Panel (b) shows the posterior means and the 95\% CIs of the
sole component in layer 1 (red), of the components belonging to layer
1 and 2 (green) and of the complete model, including components from
layer 1, 2 and 3. Panel (c) provides the predictive mean and the corresponding
95\% prediction intervals. \label{fig:fit_HR}}
\end{figure}

\subsection{Temperature Data}

In this section we study a moderate sized 2-dimensional dataset 
of average daily maximum temperature in degrees centigrade at 247 locations in Colorado
during 1997 \href{https://www.image.ucar.edu/Data/US.monthly.met/USmonthlyMet.shtml}{US precipitation and temperature (1895-1997) dataset}.
%

Qualitative comparisons of GP, BART and SAGP are shown in Figure \ref{fig:The comparison result}.
For GP regression we used MLE estimates with the
$\mathsf{Matern}(5/2)$ kernel. For BART we
use the default settings \citep{Chipman_etal2010}.
For SAGP, we choose $L=3,m=25$
and calibrate the $\alpha,\beta$ of the noise
prior in SAGP and the noise estimate in BART according to MLE of noise
estimate from GP.
The GP model shows reasonable predictions, however, the prediction comes with high
predictive variance in locations away from the observations and especially near the boundary (not shown).
The predictive mean of BART shows
it has a slight grid-like artifact due to its decision tree construction. In addition, the shape of the response around the mode is noticeably more rectangular than suggested by the other models.

This dataset provides us a 2-dimensional example where the data is
limited, which is actually a disadvantage for SAGP since the sparsification
does not cut down the computational cost significantly yet some information
is lost in the procedure. Nonetheless, the SAGP method  captures
the major trends and even some of the extremal temperatures close
to 40 degrees centigrade. Compared to BART and GP, 
the SAGP model behaves 
``in-between'' these two
methods and provides us with very competitive performance. 

\begin{figure}[t]
\centering
\includegraphics[width=5cm]{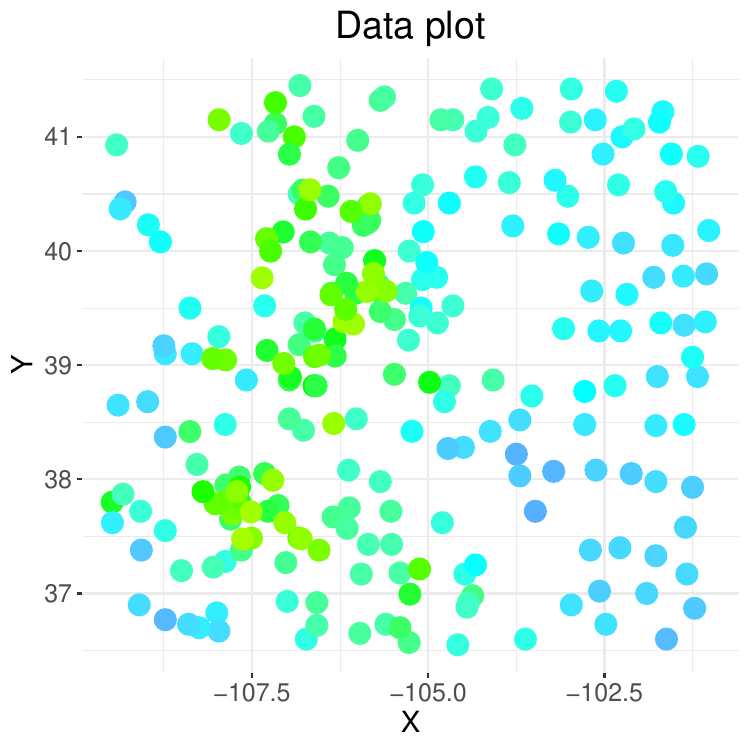}
\includegraphics[width=5cm]{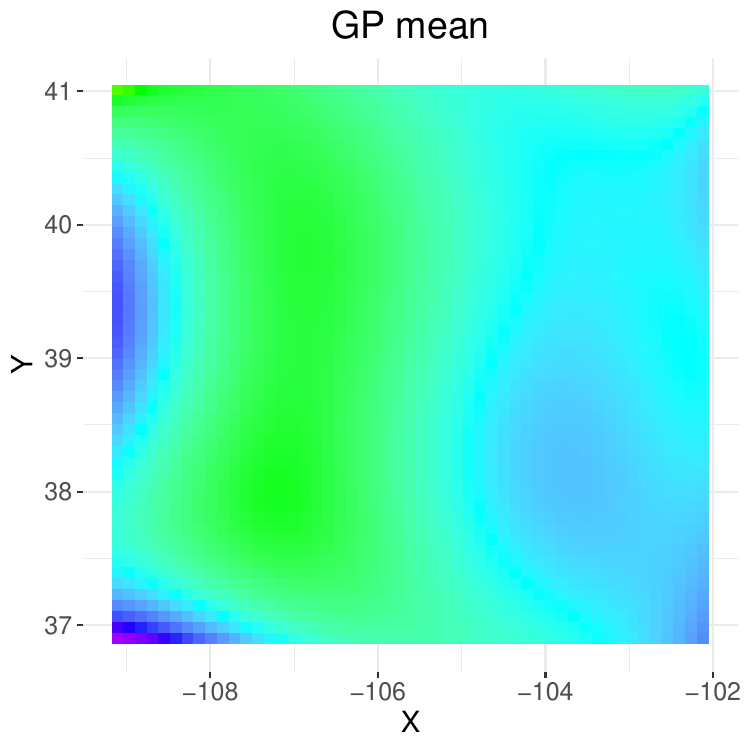}\par
\includegraphics[width=5cm]{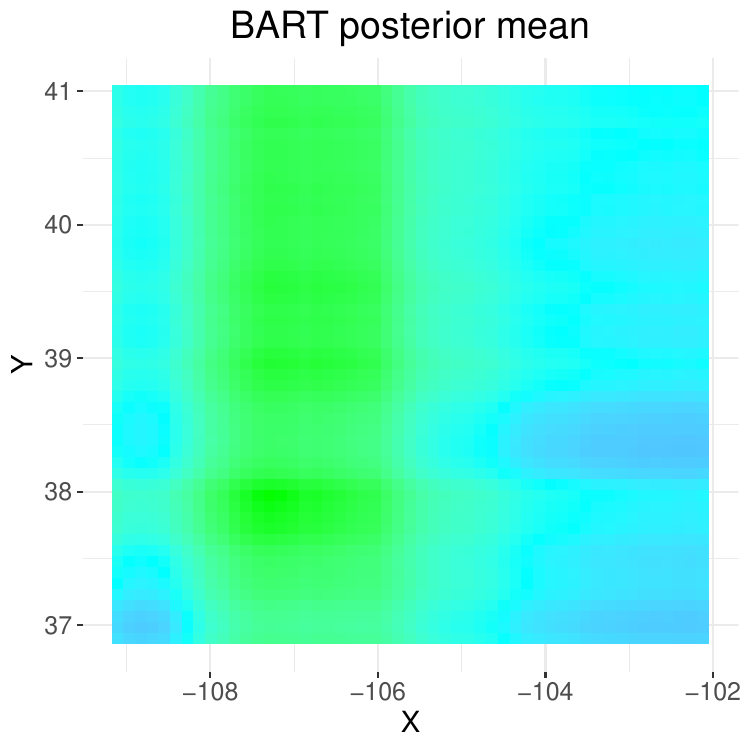} 
\includegraphics[width=5cm]{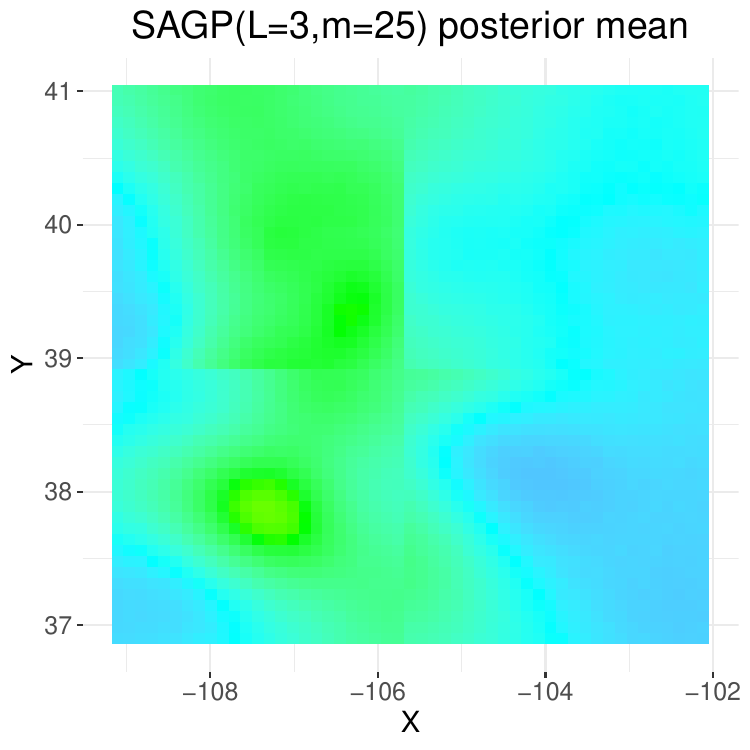}\par
\includegraphics[scale=0.6]{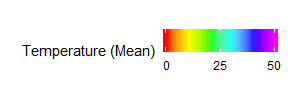}

\caption{ 
\label{fig:max_temp Original data}
\label{fig:The comparison result} 
The original max temperature dataset
for Colorado in 1997 summer. The horizontal axis is longitude;
the vertical axis is latitude; the response is the observed
values of maximal temperature in degrees Celsius.
The typical raster plot for predictive means of Gaussian Process regression (Universal kriging with Gaussian kernel and MLE nuggets)/BART(number of trees $m=200$)/SAGP($m=25,L=3$) evaluated on a fine meshed grid (generated by steplengths of 0.1) on the original input domain.
} 
\end{figure}


\subsection{Ice Sheet Data}

The Ice Sheet data is a larger 2-dimensional dataset but this time with noticeably 
uneven sampling as discussed in \citet{Park&Apley2018}.
The response is ice sheet thickness in meters collected 
over a region of west Antarctica \citep{Blankenship2004Ice}.
We used the data from 1991, first converting the longitude and latitude into
2-dimensional Euclidean coordinates and standardizing the dataset
to $[0,1]^{2}$. 

A plot of the data and predictive fits for the GP (exponential correlation), laGP, BART, treed GP (TGP; \citep{gramacy2007tgp}) and 
SAGP models are shown in Figure \ref{fig:The comparison result-1}.
We included TGP in this plot as we thought it may be helpful with the unevenly sampled data
but did not end up including it in our overall quantitative results below.
For the SAGP model, we show the fit obtained with $L=3, m=10.$


The fits obtained among these models show quite different
behaviors. The full GP fit 
possess extreme boundary behavior due to the lack of data near the boundary. The BART
model shows more noticeable grid-like artifacts in this dataset, but does not suffer from the boundary effects seen with the GP. 
The TGP regression also does not exhibit boundary effects but has much higher
variability of the mean response in the data-rich region which does not agree with the other models.
The dynamic partitioning of TGP also introduces
considerable computational cost. 
The laGP model with its default settings and MSPE criterion exhibits some degree of variability in the fitted mean response, particularly
near the boundaries,
however, it is the most computationally efficient
method. 

\begin{figure}[ht!]
\begin{centering}
\centering 
\par\end{centering}
\begin{centering}
\includegraphics[width=5cm]{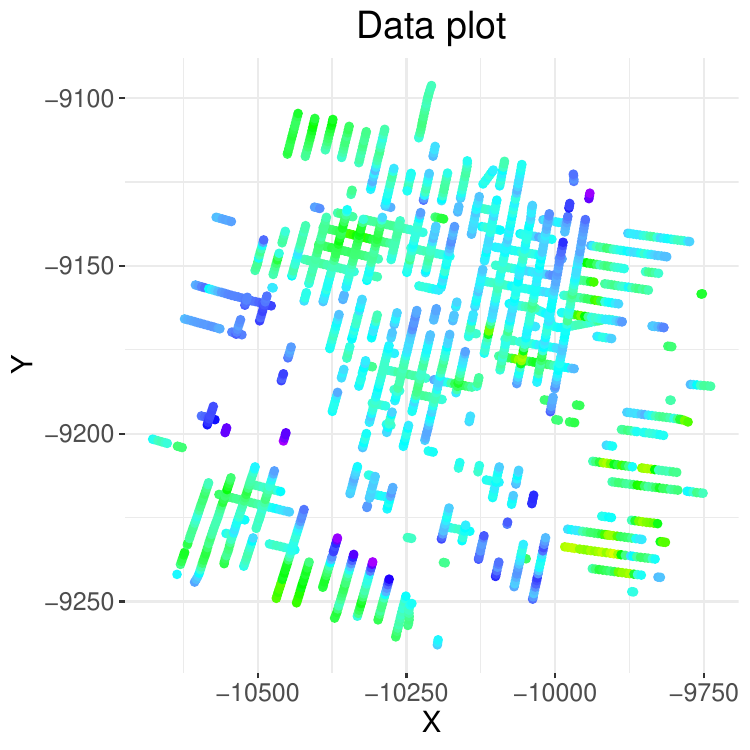}
\includegraphics[width=5cm]{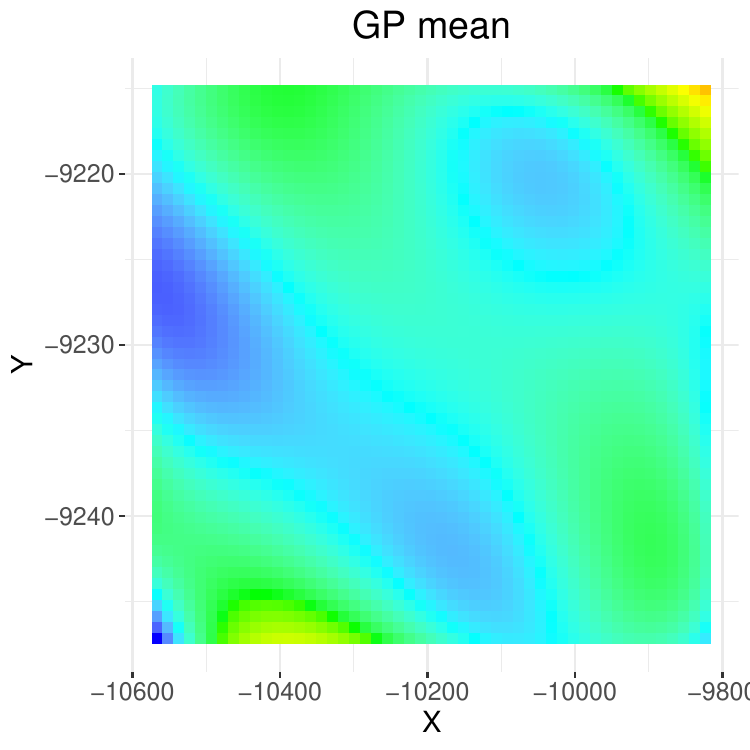}
\includegraphics[width=5cm]{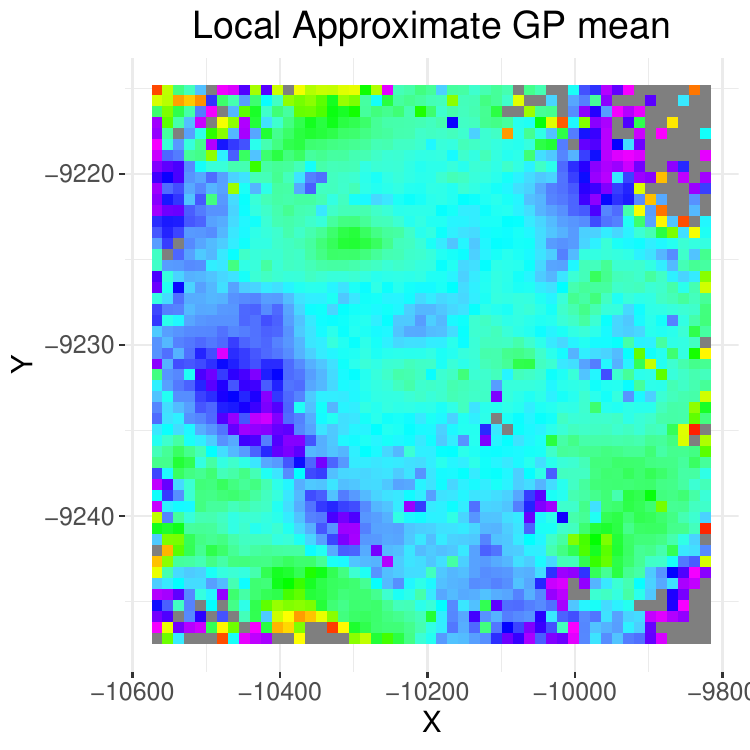}\\
\includegraphics[width=5cm]{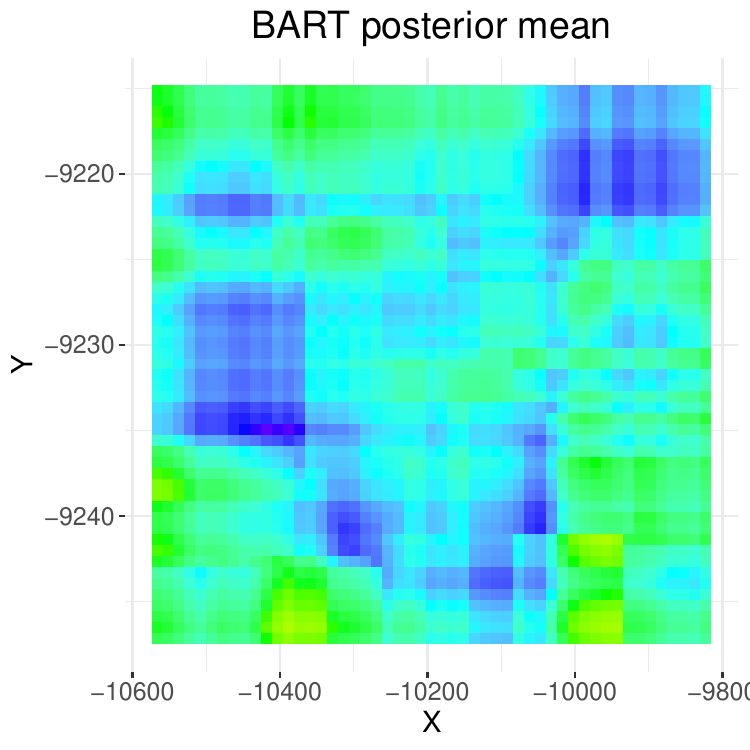}
\includegraphics[width=5cm]{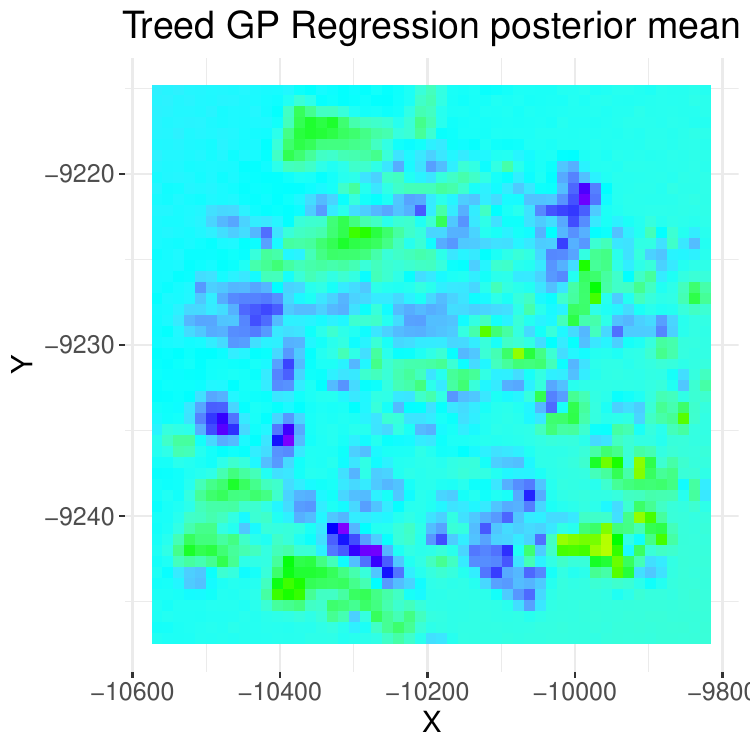}
\includegraphics[width=5cm]{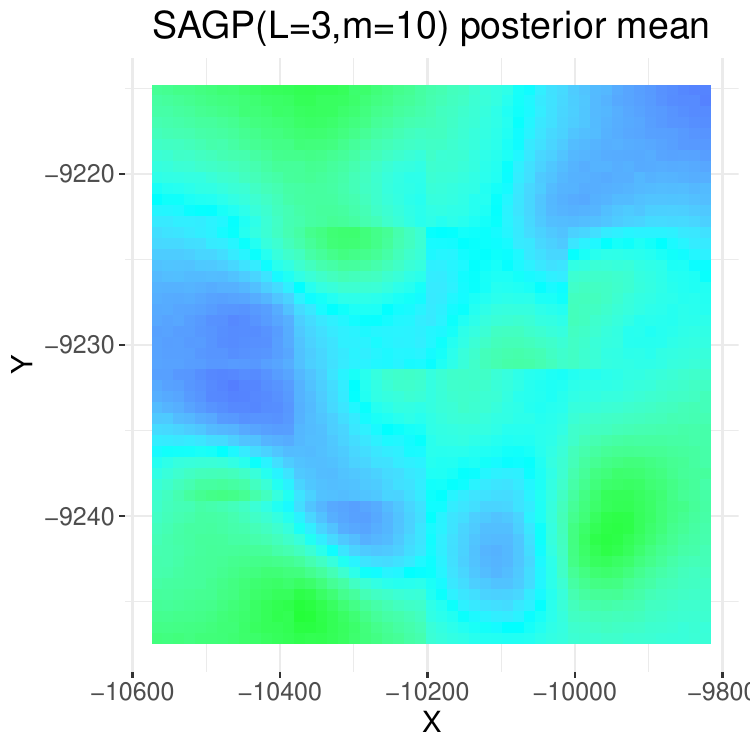}
\par
\includegraphics[scale=0.6]{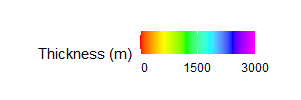}
\end{centering}
\centering{}\caption{\label{fig:The comparison result-1}\label{fig:The comparison result-1-1}The scatter point plot shows the ice thickness in a region of Antarctica. The horizontal and the vertical axis are geographical coordinates in kilometers (km).
The raster plot for predictive
means of GP/laGP/BART(number of trees $m=200$)/TGP/SAGP($m=10, L=3$) for ice sheet dataset and evaluation on a fine meshed
grid (generated by steplengths of 0.02) on the original input domain.
The color scales and the axis are the same in these plots.}
\end{figure}


\FloatBarrier

The SAGP model fit is reasonable. It does not have extreme
values, high variability in the mean response or boundary effects  like some of the other models,
yet retains most of the smoothness suggested by the full GP fit. 

\subsection{\label{subsec:Electric-Motor-Data}United Kingdom Budget Data}

This dataset is a well-known econometric dataset first studied in \citet{blundell1998semiparametric}. The dataset consists of a cross-section of 1,519 households drawn from the 1980-1982 British Family Expenditure Surveys. We attempt to predict the total household expenditure (rounded to the nearest 10 UK pounds sterling) with 8 variables as inputs. We do not use the variable of the number of children per household in the regression, since the dataset is cleaned in such a way that it contains only households with one or two children, as presented in \citet{blundell1998semiparametric}. 

We choose this 
dataset to explore the performance of our SAGP model in the higher-dimensional scenario. As mentioned by \citet{gramacy2016lagp},
such a dataset of high-dimensionality ($d=8$) will usually
present computational challenges to classical GP models. Our main
goal is to show that with reasonable increase of computational time,
SAGP model has competitive performance. 
Since this dataset cannot be easily visualized, we only present quantitative results as shown in the next section.

\subsection{\label{subsec:quantitative-results}Quantitative Performance Summary}
The performance of SAGP and the alternative models considered is summarized quantitatively in Table \ref{tab:bigtable}.
As in Section \ref{sec:Simulation Study}, we summarize the quantitative performance using 25\% test set of original dataset to calculate out-of-sample RMSE, coverage of 95\% credible intervals and interval scores.  SAGP, SGP, GP, laGP, BART and BCART models were applied to all datasets.  For SAGP, we generally selected $L=2\sim 4$ and $m=5\sim 15$ while for SGP we selected $m=5, 15$ or $150.$  BART and BCART models were fit using their defaults, and laGP was fit using defaults but with both ALC and MSPE local design criteria. 

Generally, we see that models could excel in one aspect (say RMSE) typically at the expense of another aspect of model fit quality, where the quality of fit depends on the dataset and application scenario.  
We notice that BART generally had lower coverage probability for the 95\% PI
and higher interval score. BCART had better coverage probability but generally was not the best in terms of RMSE.
For the frequentist GP, two datasets exhibited good RMSE and two exhibited weaker RMSE performance.  The GP is also less informative in terms of uncertainty
quantification than the Bayesian models we considered.
The laGP models often provided good RMSE performance, particularly with the ALC criterion, however the coverage was lower on the UK dataset.

In comparison, the SAGP model generally provided RMSE performance on par or near the best model for each dataset. The coverage also shows that SAGP models were consistent performers, especially compared to BART and laGP. We also see that SGP is uniformly worse than SAGP, often having either higher RMSE or worse coverage behavior. 
Overall, it is clear that SAGP is competitive with the best models for each dataset as summarized in Table \ref{tab:bigtable}, and we often prefer the qualitative aspect of the SAGP fits compared to some of the alternative models as demonstrated earlier.

\section{\label{sec:Conclusion}Discussion}

The SAGP model effectively borrows ideas from both sparsification
and localization.  In
particular, we divide the input domain $\mathsf{X}$ in such a way
that we can choose enough pseudo-inputs fit a sparse GP regression
within the sub-region block of the partition, which also produces
a trade-off for model parameters.
We also showed that
SAGP can achieve an effective reduction in computational cost (See Proposition
\ref{prop:For-an-RP}) since all components within a layer can effectively be fit in parallel.

As a Bayesian additive model, SAGP provides uncertainty quantification
and leads to refined posterior inference. Along the model building
process, we also exhibit how the pseudo-inputs can be sampled to capture this aspect of model uncertainty, which is ignored with the fixed pseudo-inputs of SGP. 
The RP partition scheme outlined not only serves as a localization construction 
but also guarantees adequate pseudo-inputs for this resampling are available in each SAGP model component.
As shown in the data analysis examples, the SAGP model is a competitive
candidate compared to other generalization of GP regression methods.
SAGP model can easily be generalized into higher dimensions, and our RP
partition scheme is very flexible since it carries a hierarchical
structure that allows us to analyze dataset in a multiscale way. With
the homogeneous partition in one dimension, our RP scheme is similar
to \citet{Bui&Turner2014} and \citet{Lee_etal2017}; with heterogeneous
partition in higher dimensions, our RP partition scheme is more flexible.
For example, we can use binary partitioning in the first dimension
but ternary partitioning in the second dimension. This will also preserve
the hierarchical structure and allow us to decompose the high-dimensional
data through different layers.


As for future works, there are various possible extensions of the
proposed SAGP model. In terms of generalization of our current base
model, we are interested in make the SAGP model admit different covariance
kernels and different number of pseudo-inputs in each component. It is
also of interest to extend the SAGP model to binary, count and
categorical responses. To push the computational implementation of
SAGP further and since independent sparse Gaussian process (SGP) regression
model are fitted for each local component, it is readily seen that
our model is parallelizable for efficient computation. 
Theoretically,
we would also like to see a (frequentist) consistency result for SAGP
model \citep{rockova2017posterior} and a careful analysis of the
effect of the choice of priors in this model.

\acks{The authors wish to thank the helpful feedback of the editor, associate editor and two anonymous reviewers, which helped to substantially improve the paper.  The work of M.T.P.~was supported in part by the National Science Foundation under Agreement DMS-1916231 and in part by the King Abdullah University of Science and Technology (KAUST) Office of Sponsored Research (OSR) under Award No. OSR-2018-CRG7-3800.3.
}


\newpage{}

\appendix
\section{\label{sec:plot_correlation}Correlation between Targets as Function of the Distance between Inputs}
\begin{figure}[H]
\centering
\includegraphics[width=.65\textwidth]{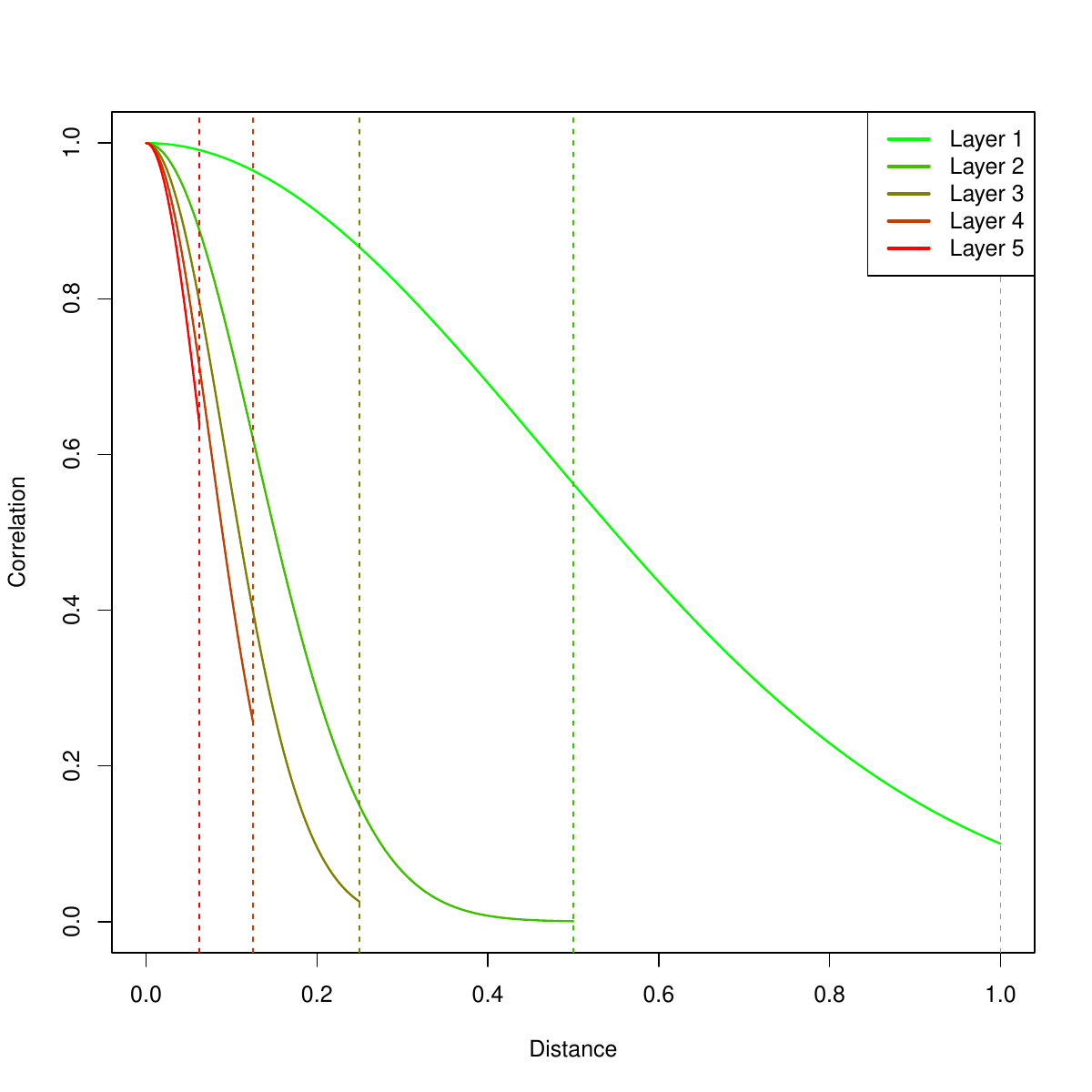}
\caption{Given two one-dimensional inputs $\boldsymbol{x}_i$ and $\boldsymbol{x}_{i'}$ ($d=1$), the figure represents the correlation between $f_j(\boldsymbol{x}_i)$ and $f_j(\boldsymbol{x}_{i'})$ (i.e., the targets of component $j$) as a function of the distance between $\boldsymbol{x}_i$ and $\boldsymbol{x}_{i'}$. We represent with different colors the correlation that is assumed for components at different layers ($L=5$ in the Figure). Notably, the size of a component's domain $B_j$ depends on the layer where the component is defined (the higher the layer index, the smaller the domain). Therefore, in our binary recursive partition scheme, the maximum possible distance  between two inputs (highlighted with a vertical dashed line in the Figure) halves at each layer. }
\end{figure}

\section{\label{sec:Proof-of-Proposition}Proof of Proposition \ref{prop:For-an-RP}}

We first calculate the total number of components in the SAGP models
when the input-domain is $[0,1]^{d}$. For each dimension $i=1,2,\ldots,d$,
if we $b_{i}$-ary subdivide the $[0,1]$ interval, then there are at most 
$|\mathcal{L}_{\ell}|=\prod_{i=1}^{d}b_{i}^{\ell-1}$ individual components
in form of $B(\boldsymbol{c}_j,\boldsymbol{w}_j)$ in the $\ell$-th layer of the
RP scheme for $\ell=1,2,\ldots,L$.

For each component in the $\ell$-th layer, the number of observations
fitted to the $j$-th component is at most $|\mathcal{X}^{(j)}|\leq n$. Then we fit a SGP model
with $m$ pseudo-inputs, whose complexity is $\mathcal{O}(|\mathcal{X}^{(j)}|\cdot m^{2})$.
Then for the $\ell$-th layer the total complexity is $\mathcal{O}(\sum_{B_j\in\mathcal{L}_{\ell}}|\mathcal{X}^{(j)}|\cdot m^{2})$. 
Therefore, for the $\ell$-th layer the complexity is at most $\mathcal{O}(|\mathcal{L}_{\ell}|\cdot n\cdot m^{2})$.
Therefore we can compute the total complexity of the model as $\mathcal{O}(\sum_{\ell=1}^{L}|\mathcal{L}_{\ell}|\cdot n\cdot m^{2})\asymp\mathcal{O}\left(\sum_{\ell=1}^{L}\prod_{i=1}^{d}b_{i}^{\ell-1}\cdot n\cdot m^{2}\right)$.


\section{\label{sec:MCMC-Algorithm-for}MCMC Algorithm for SAGP Model Fitting
(Algorithm \ref{alg:MCMC-algorithm})}

\begin{algorithm}[H]
\caption{\label{alg:MCMC-algorithm}MCMC algorithm for SAGP model.}

\setcounter{AlgoLine}{0} \SetKwData{This}{this} \SetKwFunction{AdjustBW}{Adjust
Proposal Bandwidth} \SetKwFunction{SGP}{Posterior} \SetKwFunction{Lik}{Model
Likelihood} \SetKwInOut{Input}{Input}\SetKwInOut{Output}{Output}

\Input{ RP partition scheme consisting of $N$ components, \; Number
of pseudo-inputs for each component $m_{j}$, \; Hyper-parameters
for the prior of parameters, \; Observed dataset $\{\mathcal{X},\boldsymbol{y}\}$.
\; } \Output{ Posterior samples for parameters, $\bar{\mathcal{X}}^{(j)},\bar{\boldsymbol{f}}_{j}$,
\; Predictive posterior samples for $\boldsymbol{y}_{*},\boldsymbol{f}_{j},\boldsymbol{y}$.
\; } \BlankLine 
Initialization of the parameter values\;

\While{not converged}{ Sample $\bar{\mathcal{X}}^{(j)},j=1,\ldots,N$
as in Algorithm \ref{alg:PI sampling}.


\For{$j$ in $1:N$}{ $\boldsymbol{r}_{j}\leftarrow\boldsymbol{y}-\sum_{l\neq j}\boldsymbol{K}_{nm_{l}}^{(l)}\left(\boldsymbol{K}_{m_{l}}^{(l)}\right)^{-1}\bar{\boldsymbol{f}}_{l}$\;

$\bar{\boldsymbol{f}}_{j}\leftarrow N_{m_{j}}(\boldsymbol{Mean}_{j},\boldsymbol{Var}_{j})$
as in (\ref{eq:PT posterior closed form})\;


$\eta_{j,\text{new}}\leftarrow\text{Uniform}(\eta_{j}\pm\text{bandwidth})$\;
\label{alg:ProposeEta}

$\alpha\leftarrow\min(1,{C\cdot\Lik(\eta_{j,new})}/{C\cdot\Lik(\eta_{j})})$\;


\If{Uniform(0,1)$\leq\alpha$}{ $\eta_{j}\leftarrow\eta_{j,\text{new}}$
} \tcp{For every $\text{burn-in steps}/20$ steps, we adjust bandwidth.}
\If{$\text{Acceptance rate of }\eta_{j}\notin(0.39,0.49]$}{ \label{alg:AdjBWstart}

Band width for proposing $\eta_{j}\leftarrow\text{Acceptance rate of }\eta_{j}/0.44$
\; \label{alg:AdjBW1}

}\label{alg:AdjBWend} }

$\sigma_{\epsilon}^{2}\leftarrow\text{InverseGamma}\left(\alpha_{\epsilon}+\frac{n}{2},\beta_{\epsilon}+\frac{1}{2}(\boldsymbol{y}-\hat{\boldsymbol{y}})^{T}(\boldsymbol{y}-\hat{\boldsymbol{y}})\right)$\;

} 
\end{algorithm}


\section{\label{sec:Detailed Derivation of Posterior Distribution} Detailed
Derivation of Posterior Distribution in Section \ref{sec:Posterior for PTs}}

To clarify our derivations, we first stated following simple lemma
that will be used, which can be derived from Woodbury identity \citep{Horn_Johnson1998}
or a direct verification \citep{Rasmuessen&Williams2006}. \begin{lem}
\label{lem:Conditional GP closed form Lemma}For a joint Gaussian
distribution $\boldsymbol{a}\in\mathbb{R}^{n},\boldsymbol{b}\in\mathbb{R}^{n}$
if 
\begin{align}
\left(\begin{array}{c}
\boldsymbol{a}\\
\boldsymbol{b}
\end{array}\right) & \propto N_{n+m}\left(\left(\begin{array}{c}
\boldsymbol{\mu_{a}}\\
\boldsymbol{\mu_{b}}
\end{array}\right),\left(\begin{array}{cc}
C_{\boldsymbol{aa}} & C_{\boldsymbol{ab}}\\
C_{\boldsymbol{ba}} & C_{\boldsymbol{bb}}
\end{array}\right)\right)
\end{align}
then its conditional distribution is: 
\begin{align}
\boldsymbol{a}\mid\boldsymbol{b} & \sim N_{n}\left(\boldsymbol{\mu}_{\boldsymbol{a}}+C_{\boldsymbol{ab}}\left(C_{\boldsymbol{bb}}\right)^{-1}(\boldsymbol{b-\boldsymbol{\mu}_{b}}),C_{\boldsymbol{aa}}-C_{\boldsymbol{ab}}\left(C_{\boldsymbol{bb}}\right)^{-1}C_{\boldsymbol{ba}}\right)
\end{align}

In particular, for $f_{l}=f(\boldsymbol{x}_{l}),\bar{\boldsymbol{f}}_{j}=(\bar{f}_{j}(\bar{\boldsymbol{x}}_{1}),\ldots,\bar{f}_{j}(\bar{\boldsymbol{x}}_{m_{j}}))^{T}$
and covariance kernel function $K=K^{(j)},K_{ll}^{(j)}=K{}^{(j)}(\boldsymbol{x}_{l},\boldsymbol{x}_{l})$
if 
\begin{align}
\left.\left(\begin{array}{c}
f_{l}\\
\bar{\boldsymbol{f}}_{j}
\end{array}\right)\right|\bar{\mathcal{X}}^{(j)},\boldsymbol{x}_{l} & \propto N_{1+m_{j}}\left(\left(\begin{array}{c}
0\\
\boldsymbol{0}_{m_{j}}
\end{array}\right),\left(\begin{array}{cc}
K_{ll}^{(j)} & \boldsymbol{k}_{l}^{(j)T}\\
\boldsymbol{k}_{l}^{(j)} & \boldsymbol{K}_{m_{j}}^{(j)}
\end{array}\right)\right)
\end{align}
then its conditional distribution is: 
\begin{align}
f_{l}\mid\bar{\boldsymbol{f}}_{j},\bar{\mathcal{X}}^{(j)},\boldsymbol{x}_{l} & \sim N_{1}\left(\boldsymbol{k}_{l}^{(j)T}\left(\boldsymbol{K}_{m_{j}}^{(j)}\right)^{-1}\bar{\boldsymbol{f}}_{j},K_{ll}^{(j)}-\boldsymbol{k}_{l}^{(j)T}\left(\boldsymbol{K}_{m_{j}}^{(j)}\right)^{-1}\boldsymbol{k}_{l}^{(j)}\right)
\end{align}
\end{lem}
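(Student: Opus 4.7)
The plan is to prove the general statement by an explicit decomposition of $\boldsymbol{a}$ into an affine function of $\boldsymbol{b}$ plus an independent Gaussian residual, which is the standard Gaussian-conditioning trick and is cleaner than algebraically inverting the block covariance matrix via Schur complements. The specialized GP statement will then follow instantly by matching symbols.

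\textbf{Key steps.} First, I would define the candidate residual $\tilde{\boldsymbol{a}} := \boldsymbol{a} - \boldsymbol{\mu}_{\boldsymbol{a}} - C_{\boldsymbol{ab}} C_{\boldsymbol{bb}}^{-1}(\boldsymbol{b} - \boldsymbol{\mu}_{\boldsymbol{b}})$. Since $(\boldsymbol{a},\boldsymbol{b})$ is jointly Gaussian, any affine transformation stays Gaussian, so $(\tilde{\boldsymbol{a}},\boldsymbol{b})$ is jointly Gaussian. Second, a one-line cross-covariance calculation gives $\mathbb{E}[\tilde{\boldsymbol{a}}]=0$ and $\mathrm{Cov}(\tilde{\boldsymbol{a}},\boldsymbol{b}) = C_{\boldsymbol{ab}} - C_{\boldsymbol{ab}} C_{\boldsymbol{bb}}^{-1} C_{\boldsymbol{bb}} = 0$, and because uncorrelated jointly Gaussian vectors are independent, $\tilde{\boldsymbol{a}}\perp \boldsymbol{b}$. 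Third, expanding $\mathrm{Cov}(\tilde{\boldsymbol{a}})$ produces the Schur complement $C_{\boldsymbol{aa}} - C_{\boldsymbol{ab}} C_{\boldsymbol{bb}}^{-1} C_{\boldsymbol{ba}}$. Fourth, writing $\boldsymbol{a} = \boldsymbol{\mu}_{\boldsymbol{a}} + C_{\boldsymbol{ab}} C_{\boldsymbol{bb}}^{-1}(\boldsymbol{b} - \boldsymbol{\mu}_{\boldsymbol{b}}) + \tilde{\boldsymbol{a}}$ and conditioning on $\boldsymbol{b}$, the first two terms become deterministic while the last remains Gaussian with the variance just computed, yielding the claimed $N_n\!\left(\boldsymbol{\mu}_{\boldsymbol{a}} + C_{\boldsymbol{ab}} C_{\boldsymbol{bb}}^{-1}(\boldsymbol{b}-\boldsymbol{\mu}_{\boldsymbol{b}}),\, C_{\boldsymbol{aa}} - C_{\boldsymbol{ab}} C_{\boldsymbol{bb}}^{-1} C_{\boldsymbol{ba}}\right)$.

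\textbf{Specialization and obstacle.} For the particular GP statement I would simply substitute $\boldsymbol{\mu}_{\boldsymbol{a}}=0$, $\boldsymbol{\mu}_{\boldsymbol{b}}=\boldsymbol{0}_{m_j}$, $C_{\boldsymbol{aa}}=K_{ll}^{(j)}$, $C_{\boldsymbol{ab}}=\boldsymbol{k}_{l}^{(j)T}$, $C_{\boldsymbol{ba}}=\boldsymbol{k}_{l}^{(j)}$, and $C_{\boldsymbol{bb}}=\boldsymbol{K}_{m_j}^{(j)}$ into the general formula. The only genuine obstacle is the implicit invertibility requirement on $C_{\boldsymbol{bb}}$ (equivalently $\boldsymbol{K}_{m_j}^{(j)}$). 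This holds because the covariance kernel in (\ref{eq:kernel function}) is strictly positive definite on distinct inputs and the construction in Section \ref{sec:Definitions-and-Examples (RP scheme)} together with the prior on pseudo-inputs forces the pseudo-inputs within each component to be distinct almost surely; the Cholesky-with-jitter scheme mentioned after (\ref{eq:PT posterior closed form}) handles numerical ill-conditioning. Alternative routes via the Woodbury identity on the joint precision matrix would recover the same expression but require more bookkeeping, so the decomposition approach above is preferable.
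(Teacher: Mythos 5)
Your proof is correct. Note, though, that the paper does not actually prove Lemma \ref{lem:Conditional GP closed form Lemma}: it simply cites the Woodbury identity and ``direct verification'' (i.e., completing the square in the joint density, as in Rasmussen and Williams) and moves on. Your route is genuinely different from either of those. You construct the residual $\tilde{\boldsymbol{a}} = \boldsymbol{a}-\boldsymbol{\mu}_{\boldsymbol{a}}-C_{\boldsymbol{ab}}C_{\boldsymbol{bb}}^{-1}(\boldsymbol{b}-\boldsymbol{\mu}_{\boldsymbol{b}})$, verify $\mathrm{Cov}(\tilde{\boldsymbol{a}},\boldsymbol{b})=0$, invoke the equivalence of uncorrelatedness and independence for jointly Gaussian vectors, and read off the conditional law; the cross-covariance and Schur-complement computations you sketch are exactly right, and the specialization to $(f_l,\bar{\boldsymbol{f}}_j)$ is pure substitution. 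What your approach buys is a density-free, essentially one-line argument whose only hypothesis is invertibility of $C_{\boldsymbol{bb}}$ --- and you correctly identify why $\boldsymbol{K}_{m_j}^{(j)}$ is invertible here (distinct pseudo-inputs under the discrete-uniform-without-replacement prior, strictly positive definite Gaussian kernel). What the paper's implicit route buys is continuity with the rest of Appendix \ref{sec:Detailed Derivation of Posterior Distribution}: the posterior of the pseudo-targets in (\ref{eq:Mean_j})--(\ref{eq:Var_j}) is derived precisely by completing squares and applying Woodbury, so citing those same tools for the lemma keeps the appendix's machinery uniform. One cosmetic point you could have flagged: the lemma's hypothesis writes $\boldsymbol{b}\in\mathbb{R}^{n}$ but the joint distribution is $N_{n+m}$, so $\boldsymbol{b}$ should live in $\mathbb{R}^{m}$; your argument is insensitive to this typo.
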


We assume a Gaussian prior on the pseudo-targets as in (\ref{sec:Conditional Likelihood of PTs}).
\begin{align}
P(\bar{\boldsymbol{f}}_{j}\mid\bar{\mathcal{X}}^{(j)}) & \sim N_{m_{j}}\left(\bar{\boldsymbol{f}}_{j}\mid\boldsymbol{0}_{m_{j}},\boldsymbol{K}_{m_{j}}^{(j)}\right)
\end{align}
and then use Bayesian rule on the parameter $\bar{\boldsymbol{f}}_{j}$,
recalling that (\ref{eq:SAGP model}) determines the form of mean
and variance of the Gaussian distribution $P(\boldsymbol{y}\mid\bar{\boldsymbol{f}}_{1},\ldots,\bar{\boldsymbol{f}}_{N},\bar{\mathcal{X}}^{(1)},\ldots,\bar{\mathcal{X}}^{(N)},\mathcal{X},\boldsymbol{\boldsymbol{\kappa}})$.
\\
 
\begin{align}
 & P(\bar{\boldsymbol{f}}_{j}\mid\boldsymbol{y},\mathcal{X},\bar{\boldsymbol{f}}_{1},\ldots,\bar{\boldsymbol{f}}_{N},\bar{\mathcal{X}}^{(1)},\ldots,\bar{\mathcal{X}}^{(N)},\boldsymbol{\boldsymbol{\kappa}})\nonumber \\
\propto & P(\boldsymbol{y}\mid\bar{\boldsymbol{f}}_{1},\ldots\bar{\boldsymbol{f}}_{N},\bar{\mathcal{X}}^{(1)},\ldots,\bar{\mathcal{X}}^{(N)},\boldsymbol{\boldsymbol{\kappa}})\times\nonumber \\
 & P(\bar{\boldsymbol{f}}_{j}\mid\{\boldsymbol{x}\}_{n},\bar{\boldsymbol{f}}_{1},\ldots,\bar{\boldsymbol{f}}_{j-1},\bar{\boldsymbol{f}}_{j+1},\ldots\bar{\boldsymbol{f}}_{N},\bar{\mathcal{X}}^{(1)},\ldots,\bar{\mathcal{X}}^{(N)},\boldsymbol{\boldsymbol{\kappa}})\\
= & N_{n}\left(\left.\boldsymbol{y}-\sum_{l\neq j}\boldsymbol{K}_{nm_{l}}^{(l)}\left(\boldsymbol{K}_{m_{l}}^{(l)}\right)^{-1}\bar{\boldsymbol{f}}_{l}\right|\boldsymbol{K}_{nm_{j}}^{(j)}\left(\boldsymbol{K}_{m_{j}}^{(j)}\right)^{-1}\bar{\boldsymbol{f}}_{j},\boldsymbol{\Lambda}_{n}^{(j)}+\sigma_{\epsilon}^{2}\boldsymbol{I}_{n}\right)\times\nonumber \\
 & N_{m_{j}}\left(\bar{\boldsymbol{f}}_{j}\vert\boldsymbol{0}_{m_{j}},\boldsymbol{K}_{m_{j}}^{(j)}\right)
\end{align}
We can derive the posterior using the normal normal conjugacy: 
\begin{align}
 & P(\bar{\boldsymbol{f}}_{j}\mid\boldsymbol{y},\mathcal{X},\bar{\boldsymbol{f}}_{1},\ldots,\bar{\boldsymbol{f}}_{j-1},\bar{\boldsymbol{f}}_{j+1},\ldots\bar{\boldsymbol{f}}_{N},\bar{\mathcal{X}}^{(1)},\ldots,\bar{\mathcal{X}}^{(N)},\boldsymbol{\boldsymbol{\kappa}})\nonumber \\
\propto & \frac{1}{\sqrt{\left|2\pi\left(\boldsymbol{\Lambda}_{n}^{(j)}+\sigma_{\epsilon}^{2}\boldsymbol{I}_{n}\right)\right|}}\exp\left\{ -\frac{1}{2}\left[\boldsymbol{y}-\sum_{l=1}^{N}\boldsymbol{K}_{nm_{l}}^{(l)}\left(\boldsymbol{K}_{m_{l}}^{(l)}\right)^{-1}\bar{\boldsymbol{f}}_{l}\right]^{T}\left(\boldsymbol{\Lambda}_{n}^{(j)}+\sigma_{\epsilon}^{2}\boldsymbol{I}_{n}\right)^{-1}\times\right.\nonumber \\
 & \left.\left[\boldsymbol{y}-\sum_{l=1}^{N}\boldsymbol{K}_{nm_{l}}^{(l)}\left(\boldsymbol{K}_{m_{l}}^{(l)}\right)^{-1}\bar{\boldsymbol{f}}_{l}\right]\right\} \times\nonumber \\
 & \frac{1}{\sqrt{\left|2\pi\boldsymbol{K}_{m_{j}}\right|}}\exp\left\{ -\frac{1}{2}\bar{\boldsymbol{f}}_{j}^{T}\boldsymbol{K}_{m_{j}}^{-1}\bar{\boldsymbol{f}}_{j}\right\} \\
 & \text{We complete the squares inside the exponent,}\nonumber \\
\propto & \exp\left\{ -\frac{1}{2}\bar{\boldsymbol{f}}_{j}^{T}\left(\boldsymbol{K}_{m_{j}}^{-1}+\left[\left(\boldsymbol{K}_{m_{j}}^{(j)}\right)^{-1}\boldsymbol{K}_{m_{j}n}^{(j)}\left(\boldsymbol{\Lambda}_{n}^{(j)}+\sigma_{\epsilon}^{2}\boldsymbol{I}_{n}\right)^{-1}\boldsymbol{K}_{nm_{j}}^{(j)}\left(\boldsymbol{K}_{m_{j}}^{(j)}\right)^{-1}\right]\right)\bar{\boldsymbol{f}}_{j}\right.\nonumber \\
 & \left.-\boldsymbol{f}_{j}^{T}\left(\boldsymbol{\Lambda}_{n}^{(j)}+\sigma_{\epsilon}^{2}\boldsymbol{I}_{n}\right)^{-1}\boldsymbol{K}_{nm_{j}}^{(j)}\left(\boldsymbol{K}_{m_{j}}^{(j)}\right)^{-1}\bar{\boldsymbol{f}}_{j}+\text{proportionally constant terms}\right\} 
\end{align}
After completing square we can obtain the mean and variance of the
$j$-th component: 
\begin{align}
\boldsymbol{Mean}_{j}= & \left(\left(\boldsymbol{K}_{m_{j}}^{(j)}\right)^{-1}+\left[\left(\boldsymbol{K}_{m_{j}}^{(j)}\right)^{-1}\boldsymbol{K}_{m_{j}n}^{(j)}\left(\boldsymbol{\Lambda}_{n}^{(j)}+\sigma_{\epsilon}^{2}\boldsymbol{I}_{n}\right)^{-1}\boldsymbol{K}_{nm_{j}}^{(j)}\left(\boldsymbol{K}_{m_{j}}^{(j)}\right)^{-1}\right]\right)\times\nonumber \\
 & \left(\left(\boldsymbol{y}-\sum_{l\neq j}\boldsymbol{K}_{nm_{l}}^{(l)}\left(\boldsymbol{K}_{m_{l}}^{(l)}\right)^{-1}\bar{\boldsymbol{f}}_{l}\right)^{T}\left(\boldsymbol{\Lambda}_{n}^{(j)}+\sigma_{\epsilon}^{2}\boldsymbol{I}_{n}\right)^{-1}\boldsymbol{K}_{nm_{j}}^{(j)}\left(\boldsymbol{K}_{m_{j}}^{(j)}\right)^{-1}\right)^{T}\nonumber \\
= & \boldsymbol{K}_{m_{j}}^{(j)}\boldsymbol{Q}_{m_{j}}^{(j)-1}\boldsymbol{K}_{m_{j}n}^{(j)}\left(\boldsymbol{\Lambda}_{n}^{(j)}+\sigma_{\epsilon}^{2}\boldsymbol{I}_{n}\right)^{-1}\left(\boldsymbol{y}-\sum_{l\neq j}\boldsymbol{K}_{nm_{l}}^{(l)}\left(\boldsymbol{K}_{m_{l}}^{(l)}\right)^{-1}\bar{\boldsymbol{f}}_{l}\right)\label{eq:Mean_j}
\end{align}
By Woodbury identity, we know that for $\boldsymbol{Q}_{m_{j}}^{(j)}=\boldsymbol{K}_{m_{j}}^{(j)}+\boldsymbol{K}_{m_{j}n}^{(j)}\left(\boldsymbol{\Lambda}_{n}^{(j)}+\sigma_{\epsilon}^{2}\boldsymbol{I}_{n}\right)^{-1}\boldsymbol{K}_{nm_{j}}^{(j)}$
we can write its inverse as 
\begin{align*}
\boldsymbol{Q}_{m_{j}}^{(j)-1} & =\left\{ \boldsymbol{K}_{m_{j}}^{(j)-1}-\boldsymbol{K}_{m_{j}}^{(j)-1}\boldsymbol{K}_{m_{j}n}^{(j)}\left[\left(\boldsymbol{\Lambda}_{n}^{(j)}+\sigma_{\epsilon}^{2}\boldsymbol{I}_{n}\right)+\boldsymbol{K}_{nm_{j}}^{(j)}\boldsymbol{K}_{m_{j}}^{(j)-1}\boldsymbol{K}_{m_{j}n}^{(j)}\right]^{-1}\boldsymbol{K}_{nm_{j}}^{(j)}\boldsymbol{K}_{m_{j}}^{(j)-1}\right\} 
\end{align*}
Using this $m_{j}\times m_{j}$ matrix $\boldsymbol{Q}_{m_{j}}$,
we can write down the covariance matrix $\boldsymbol{Var}_{j}$: 
\begin{align}
\boldsymbol{Var}_{j}= & \left(\left(\boldsymbol{K}_{m_{j}}^{(j)}\right)^{-1}+\left[\left(\boldsymbol{K}_{m_{j}}^{(j)}\right)^{-1}\boldsymbol{K}_{m_{j}n}^{(j)}\left(\boldsymbol{\Lambda}_{n}^{(j)}+\sigma_{\epsilon}^{2}\boldsymbol{I}_{n}\right)^{-1}\boldsymbol{K}_{nm_{j}}^{(j)}\left(\boldsymbol{K}_{m_{j}}^{(j)}\right)^{-1}\right]\right)^{-1}\\
= & \boldsymbol{K}_{m_{j}}^{(j)}-\boldsymbol{K}_{m_{j}}^{(j)}\left[\left(\boldsymbol{K}_{m_{j}}^{(j)}\right)^{-1}\boldsymbol{K}_{m_{j}n}^{(j)}\right]\times\nonumber \\
 & \left[\left(\boldsymbol{\Lambda}_{n}^{(j)}+\sigma_{\epsilon}^{2}\boldsymbol{I}_{n}\right)+\boldsymbol{K}_{nm_{j}}^{(j)}\left(\boldsymbol{K}_{m_{j}}^{(j)}\right)^{-1}\boldsymbol{K}_{m_{j}}^{(j)}\left(\boldsymbol{K}_{m_{j}}^{(j)}\right)^{-1}\boldsymbol{K}_{m_{j}n}^{(j)}\right]^{-1}\times\nonumber \\
 & \boldsymbol{K}_{nm_{j}}^{(j)}\left(\boldsymbol{K}_{m_{j}}^{(j)}\right)^{-1}\boldsymbol{K}_{m_{j}}^{(j)}\\
= & \boldsymbol{K}_{m_{j}}^{(j)}-\boldsymbol{K}_{m_{j}n}^{(j)}\left[\left(\boldsymbol{\Lambda}_{n}^{(j)}+\sigma_{\epsilon}^{2}\boldsymbol{I}_{n}\right)+\boldsymbol{K}_{nm_{j}}^{(j)}\left(\boldsymbol{K}_{m_{j}}^{(j)}\right)^{-1}\boldsymbol{K}_{m_{j}n}^{(j)}\right]^{-1}\boldsymbol{K}_{nm_{j}}^{(j)}\nonumber \\
= & \boldsymbol{K}_{m_{j}}^{(j)}\left\{ \left(\boldsymbol{K}_{m_{j}}^{(j)}\right)^{-1}-\right.\nonumber \\
 & \left.\left(\boldsymbol{K}_{m_{j}}^{(j)}\right)^{-1}\boldsymbol{K}_{m_{j}n}^{(j)}\left[\left(\boldsymbol{\Lambda}_{n}^{(j)}+\sigma_{\epsilon}^{2}\boldsymbol{I}_{n}\right)+\boldsymbol{K}_{nm_{j}}^{(j)}\left(\boldsymbol{K}_{m_{j}}^{(j)}\right)^{-1}\boldsymbol{K}_{m_{j}n}^{(j)}\right]^{-1}\left(\boldsymbol{K}_{m_{j}}^{(j)}\right)^{-1}\right\} \boldsymbol{K}_{nm_{j}}^{(j)}\\
= & \boldsymbol{K}_{m_{j}}^{(j)}\boldsymbol{Q}_{m_{j}}^{(j)-1}\boldsymbol{K}_{m_{j}}^{(j)}\label{eq:Var_j}
\end{align}
Note that although we do need to invert an $n\times n$ matrix $\boldsymbol{\Lambda}_{n}^{(j)}+\sigma_{\epsilon}^{2}\boldsymbol{I}_{n}$,
it is a diagonal matrix and hence easy to invert as claimed before.


\section{\label{sec:Sim-1D diagnostic}Diagnostic Statistics for the SAGP
Model on 1000 Batches of Simulated Dataset}
\begin{figure}[H]
\centering
\includegraphics[width=7cm,height=14cm]{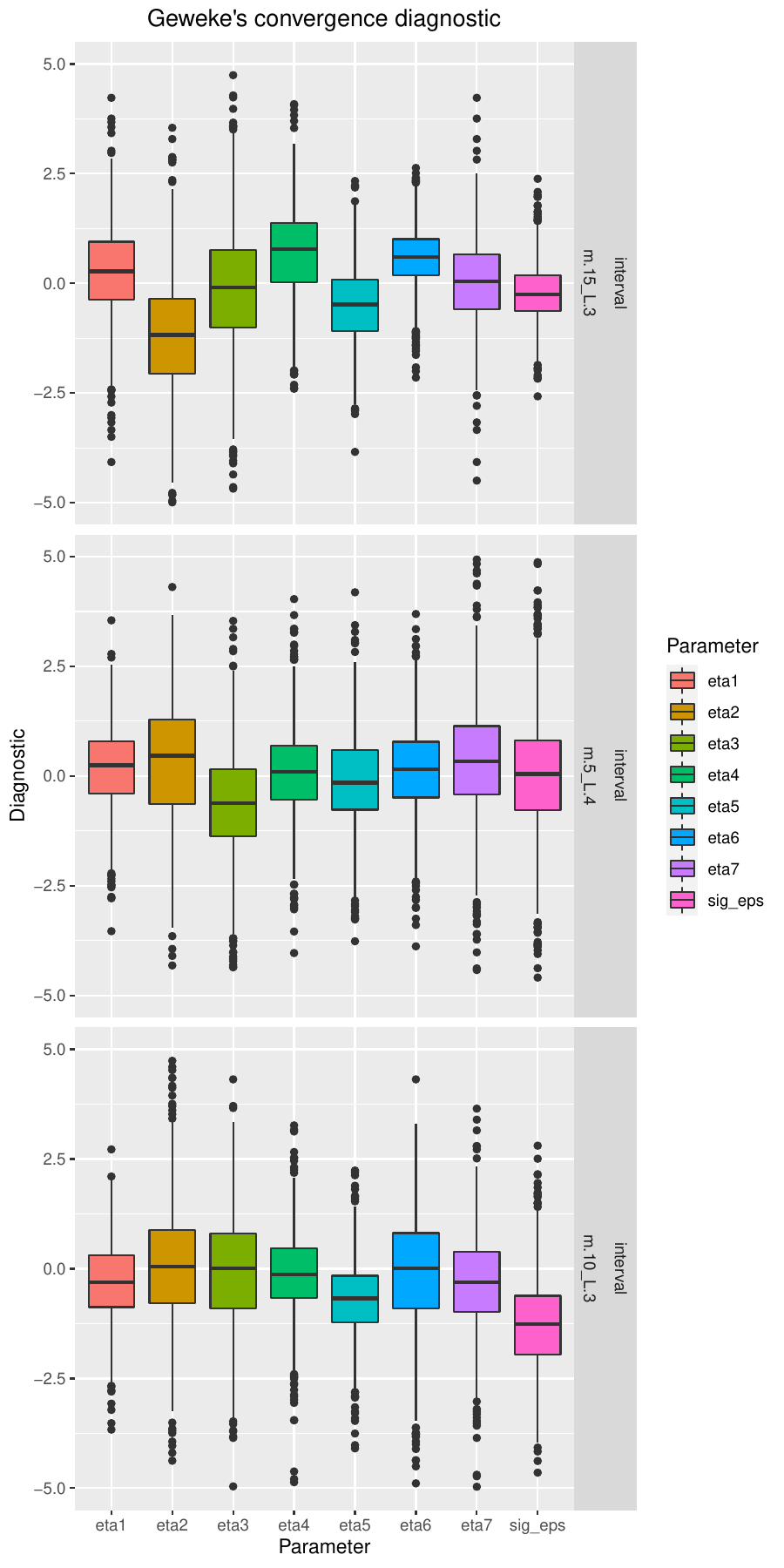}\includegraphics[width=7cm,height=14cm]{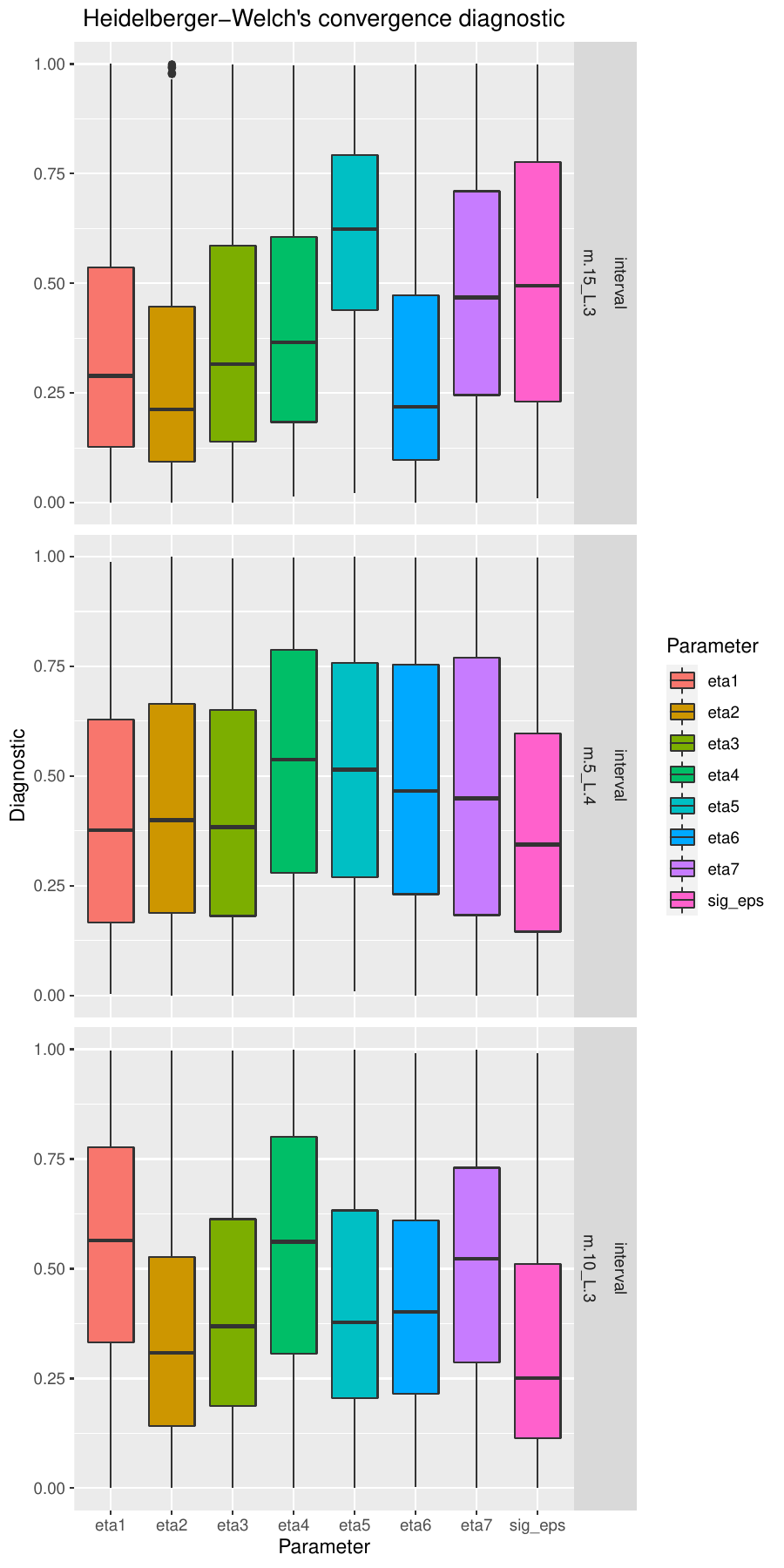}

\caption{The panels of box plots show the Geweke's convergence diagnostic \citep{geweke1991evaluating}
and Heidelberger-Welch's convergence diagnostic \citep{heidelberger1983simulation}
based on the MCMC sample of SAGP model, for parameter $\eta^{(j)}$
and $\sigma_{\epsilon}^{2}$, calculated from the 1000 batches of
simulated dataset from formula (\ref{eq:simulated dta formula}) with
the testing set is random or interval. \label{fig:diagnostic_random}}
\end{figure}
\FloatBarrier

\section{\label{sec:Heart-Rate-Dataset}Heart Rate Dataset Analyzed by SAGP
Model Fitted with \texorpdfstring{$m=5$}{m=5} and \texorpdfstring{$L=4$}{L=4} (Figure \ref{fig:fit_HR-1})}

\FloatBarrier

\begin{figure}[h!]
\includegraphics[width=1\textwidth]{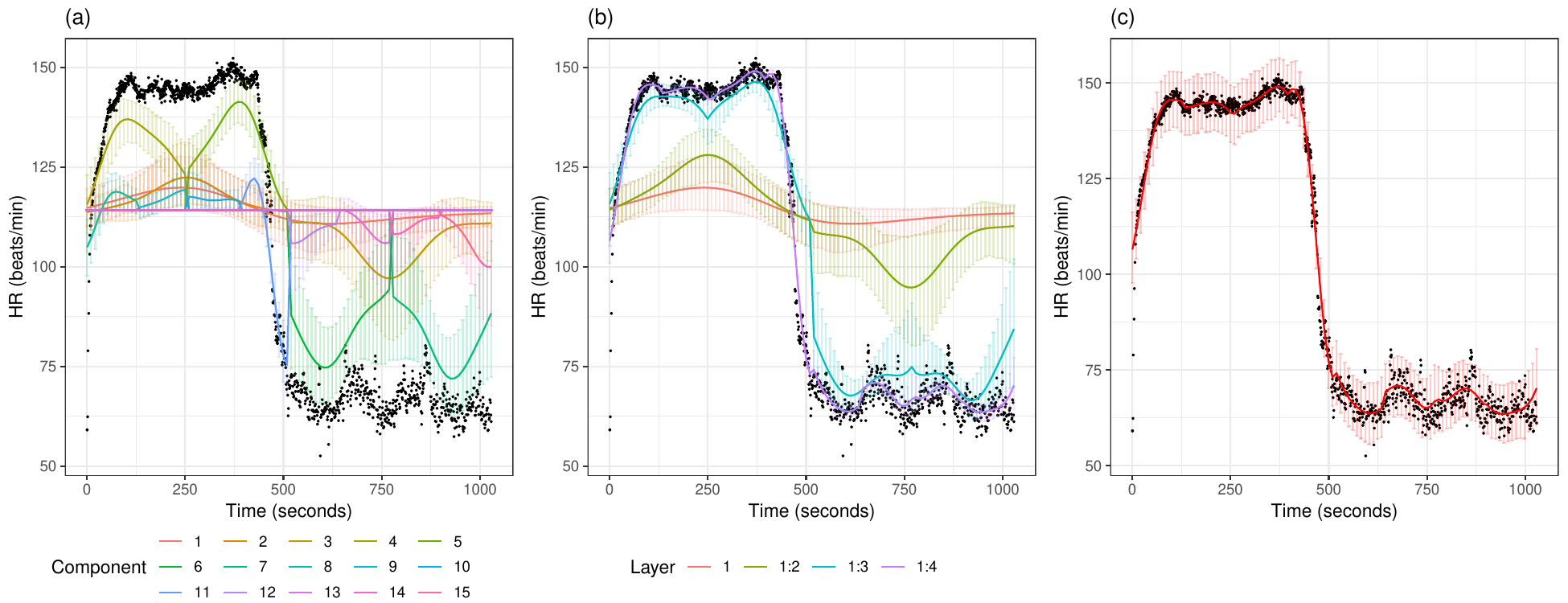}

\caption{The panels show the observed HR values over time as black dots and
results about the fit of the SAGP model with $m=5$ and $L=4$. Panel
(a) shows the posterior means and the 95\% CIs of the 15 additive
components of the SAGP model on 100 equispaced locations on the support
of the data. Panel (b) shows the posterior means and the 95\% CIs
of the sole component in layer 1 (red), of the components belonging
to layer 1 and 2 (green) , of the components belonging to layer 1,
2, 3 and of the complete model, including components from layer 1,
2, 3 and 4. Panel (c) provides the predictive mean and the corresponding
95\% prediction intervals. \label{fig:fit_HR-1}}
\end{figure}

\FloatBarrier



\vskip 0.2in 

\bibliography{bibliography}

\end{document}